\newtheorem{theorem}{Theorem}
\newtheorem{lemma}[theorem]{Lemma}
\newtheorem{remark}[theorem]{Remark}
\newtheorem{corollary}[theorem]{Corollary}
\newtheorem{proposition}[theorem]{Proposition}
\theoremstyle{definition}
\newtheorem{definition}[theorem]{Definition}
\newtheorem{definition-lemma}[theorem]{Definition-Lemma}
\def\a{\alpha}
\def\b{\beta}
\def\g{\gamma}
\def\la{\longrightarrow}
\def\hp{\hphantom{x}}
\newcommand{\vi}{$\;\,{\sf {(i)}}\;$}
\newcommand{\vii}{$\;{\sf {(ii)}}\;$}
\newcommand{\viii}{${\sf {(iii)}}\;$}
\newcommand{\eps}{{\varepsilon}}
\newcommand{\E}{{\mathrm{E}}}
\newcommand{\M}{{\mathrm{M}}}
\newcommand{\ol}{\overline}
\newcommand{\ot}{{\otimes}}
\newcommand{\Q}{\mathbb{Q}}
\newcommand{\hh}{\widetilde{HH}}
\newcommand{\cc}{\widetilde{CC}}
\newcommand{\hc}{\widetilde{HC}}
\begin{document}

\title{Batalin-Vilkovisky coalgebra of string topology}

\author{Xiaojun Chen and Wee Liang Gan}

\date{}

\maketitle

\begin{abstract}
We show that the reduced Hochschild homology of a DG open 
Frobenius algebra has the natural structure of a Batalin-Vilkovisky
coalgebra, and the reduced cyclic homology has the natural structure
of a gravity coalgebra.
This gives an algebraic model for a Batalin-Vilkovisky
coalgebra structure on the reduced homology of the 
free loop space of a simply connected closed oriented manifold,
and a gravity coalgebra structure on the reduced equivariant homology.
\end{abstract}

\bigskip
\centerline{\sf Table of Contents}
\vskip -1mm

$\hspace{30mm}$ {\footnotesize \parbox[t]{115mm}{
\hp${}_{}$\!
\hp\!1.{ $\;\,$} {\tt Introduction.} \newline
\hp2.{ $\;\,$} {\tt Batalin-Vilkovisky algebras and gravity algebras.} \newline
\hp3.{ $\;\,$} {\tt The Batalin-Vilkovisky algebra.} \newline
\hp4.{ $\;\,$} {\tt The gravity algebra.} \newline
\hp5.{ $\;\,$} {\tt The Batalin-Vilkovisky coalgebra.}\newline
\hp6.{ $\;\,$} {\tt The gravity coalgebra.}\newline
}
}
\bigskip

\section{\bf Introduction}

Let $M$ be a simply connected closed oriented $m$-manifold and $LM$ its free loop space. 
Felix and Thomas \cite{FT07} gave a construction of the
Batalin-Vilkovisky algebra structure on the homology of $LM$
in terms of Hochschild homology of a Poincar\'e duality model
of $M$. The aim of this paper is to show
that the reduced Hochschild homology, which gives the
homology of $LM$ relative to constant loops,
has the structure of a Batalin-Vilkovisky coalgebra. As a consequence it is also shown that the reduced cyclic homology of the Poincar\'e duality model,
which models the equivariant homology of $LM$ relative to the constant loops, has the structure of a gravity algebra and coalgebra.

\subsection{}
Throughout this paper, we shall work over the field of rational numbers. By $C_*(-)$ and $C^*(-)$, we mean the complex of
singular chains and the complex of singular cochains. 
We shall grade $C^*(-)$ negatively.
Applying a recent theorem of Lambrechts and Stanley \cite[Theorem 1.1]{LS}
to the Sullivan minimal model of $M$, it follows that there is a 
commutative differential graded (DG) algebra $A$ such that:
\begin{itemize}
\item $A$ is connected, finite dimensional, and
quasi-isomorphic to the DG algebra $C^*(M)$;
\item There is an $A$-bimodule isomorphism $A\to A^\vee$ of 
degree $m$ commuting with the differential and inducing the 
Poincar\'e duality isomorphism $H^*(M)\to H_{m+*}(M)$ on homology.
\end{itemize} 
Following Felix and Thomas \cite{FT07}, we call $A$ 
a \emph{Poincar\'e duality model} for $M$. 

Let $C=A^\vee$, the dual space of $A$. Since $A$ is a commutative DG algebra,
$C$ is a cocommutative DG coalgebra. 
The linear isomorphism $A\stackrel{\cong}{\la}C[m]$ induces 
the structure of a commutative
DG algebra on $C$ whose product is of degree $-m$. Moreover,
the coproduct 
$$\Delta: C\la C\ot C, \quad x\mapsto x'\ot x''$$
is a map of  $C$-bimodules. Thus, $C$ forms a DG open 
Frobenius algebra in the following sense, which models the chain complex of $M$:

\begin{definition}[DG open Frobenius algebra]
Let $C$ be a chain complex over a field $k$. 
A \emph{DG open Frobenius algebra} of degree $m$ 
on $C$ is a triple $(C,\cdot, \Delta)$ such that 
$(C,\cdot)$ is a DG commutative algebra whose product is of 
degree $-m$; $(C,\Delta)$ is a DG cocommutative coalgebra;
and 
\begin{equation}  \label{frob}
(x\cdot y)'\ot (x\cdot y)'' = (x\cdot y')\ot y'' 
= (-1)^{m|x'|}x'\ot (x'' \cdot y) ,  
\quad\mbox{ for any }  x,y\in C. 
\end{equation}
\end{definition}

\subsection{}
From now on, we shall denote by $C$ a 
DG open Frobenius algebra of degree $m$
with differential $d$, counit $\eps$, and a coaugmentation $\mathbb Q\hookrightarrow C$.
By the Hochschild homology $HH_*(C)$
and cyclic homology $HC_*(C)$ of $C$, we mean the 
Hochschild homology and cyclic homology of 
the underlying DG \emph{coalgebra} structure of $C$. 
We recall their definitions:

\begin{definition}[Hochschild homology of coalgebra]
The \emph{Hochschild homology} $HH_*(C)$ of $C$ is 
the homology of the normalized cocyclic cobar complex 
$(CC_*(C),b)$, where
$$ CC_*(C) =  \prod_{n=0}^\infty C\otimes (\Sigma \ol{C})^{\ot n} ,$$
and 
\begin{eqnarray} \label{differentialb}
&& b(
a_0[a_1|\cdots|a_n])\label{twisteddiff}   \\ 
&:=&da_0[a_1|\cdots|a_n]
+\sum_{i=1}^n
(-1)^{|a_0|+|[a_1|\cdots|a_{i-1}]|}a_0 [a_1|\cdots|da_i|\cdots|a_n]\nonumber\\
&&+\sum_{i=1}^n(-1)^{|a_0|+|[a_1|\cdots|a_{i-1}|a_i']|}a_0
[a_1|\cdots|a_i'|a_i''|\cdots|a_n]\nonumber\\
&&+(-1)^{|a_0'|}a_0'
\Big([a_0''|a_1|\cdots|a_n]-(-1)^{(|a_0''|-1)|[a_1|\cdots|a_n]|}[a_1|\cdots|a_n|a_0'']\Big).\nonumber
\end{eqnarray}
\end{definition}

Here, $\ol{C}:=C/\mathbb Q\simeq \ker\{\eps:C\to\Q\}$, 
$\Sigma$ is the desuspension functor (shifting the degrees of
$\ol{C}$ down by one), and we write the elements of 
$C\ot (\Sigma\ol{C})^{\ot n}$ in the form $a_0[a_1|\cdots|a_n]$.
One easily checks that $b^2=0$.
Connes' cyclic operator on the normalized cocyclic cobar complex is given by
$$\begin{array}{cccl}
B: & CC_*(C)&\longrightarrow& CC_{*+1}(C)\\
&a_0 [a_1|\cdots|a_n]&\longmapsto&\displaystyle\sum_{i=1}^n
(-1)^{|[a_i|\cdots|a_n]||[a_1|\cdots|a_{i-1}]|} \varepsilon(a_0)a_i
[a_{i+1}|\cdots|a_n|a_1|\cdots|a_{i-1}].
\end{array}$$
One has $B^2=0$ and $bB+Bb=0$.
\begin{definition}[Cyclic homology of coalgebra]\label{cycliccomplex}
The \emph{cyclic homology} $HC_*(C)$ of $C$ is the
homology of the chain complex $CC_*(C)[u]$, where $u$ is a 
parameter of degree $2$, with differential $b+u^{-1}B$ defined by:
$$(b+u^{-1}B)(\a\otimes u^n)=\left\{\begin{array}{lcc}
b\a\otimes u^n+B\a\otimes u^{n-1},&\mbox{if}&n>0,\\
b\a,&\mbox{if}&n=0,\end{array}\right.$$ 
for $\a\in CC_*(C)$.
\end{definition}

As in the algebra case, one has Connes' exact sequence (cf. \cite[Theorem 8.3]{Chen}):
\begin{equation}\label{Connes_exact_seq}
\xymatrix{\cdots\ar[r]&HH_*(C)\ar[r]^\E& HC_*(C)\ar[r]& 
HC_{*-2}(C)\ar[r]^\M&HH_{*-1}(C)\ar[r]&\cdots.}
\end{equation}
We now recall the Batalin-Vilkovisky algebra structure on the
Hochschild homology of a DG open Frobenius algebra. First, let us recall the following:

\begin{definition}[Batalin-Vilkovisky algebra]\label{equiv_def_BV}
A \emph{Batalin-Vilkovisky algebra} 
is a graded commutative algebra $(V,\bullet)$ together with a
linear map $\Delta: V_* \to V_{*+1}$ such that 
$\Delta\circ\Delta=0$, and for all $a,b,c\in V$,
\begin{eqnarray} 
\Delta(a\bullet b\bullet c)&=&\Delta(a\bullet b)\bullet c+(-1)^{|a|}a\bullet\Delta(b\bullet c)
+(-1)^{(|a|-1)|b|}b\bullet\Delta(a\bullet c)     \nonumber \\
&&-(\Delta a)\bullet b\bullet c-(-1)^{|a|}a\bullet (\Delta b)\bullet
c-(-1)^{|a|+|b|}a\bullet b\bullet (\Delta c).     \label{secondorder}
\end{eqnarray}
\end{definition}

Now for a DG open Frobenius algebra $C$, define a product
$$\bullet: CC_*(C)\ot CC_*(C)\longrightarrow CC_*(C)$$
by
\begin{equation}
a_0 [a_1|\cdots|a_n]\bullet b_0  [b_1|\cdots|b_r]:=
(-1)^{|b_0||[a_1|\cdots|a_n]|}a_0b_0 [a_1|\cdots
|a_n|b_1|\cdots|b_r] .
\end{equation}

\begin{theorem}[Tradler]\label{theoremBV}
The Hochschild homology $HH_*(C)[m]$ is a 
Batalin-Vilkovisky algebra with differential $B$ 
and product $\bullet$.
\end{theorem}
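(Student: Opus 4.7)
The plan is to verify four things in order: that $\bullet$ descends to a product on $HH_*(C)$; that the induced product is associative and graded commutative (after the shift by $m$); that $B$ is a square-zero degree $+1$ operator on $HH_*(C)$; and that the seven-term BV identity \eqref{secondorder} holds for $\Delta = B$.

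First I would check that $\bullet$ is a chain map, by computing $b(\alpha\bullet\beta)-(b\alpha)\bullet\beta-(-1)^{|\alpha|}\alpha\bullet(b\beta)$ term by term. The contributions coming from the internal differential $d$ and from splitting the internal letters $a_i$ and $b_j$ via $\Delta$ telescope and cancel after collecting signs. The only delicate part is the last line of \eqref{twisteddiff}, where $a_0$ (or $b_0$) is split and one half is cycled to the end of the concatenated word. To match this against $\Delta(a_0\cdot b_0)$ split and cycled, I would invoke exactly the open Frobenius relation \eqref{frob}. Associativity of the induced product is strict at the chain level, since concatenation of bar words and multiplication on $C$ are both associative; graded commutativity is expected to hold only up to a chain homotopy obtained by cycling $\alpha = a_0[a_1|\cdots|a_n]$ through the positions of $\beta = b_0[b_1|\cdots|b_r]$, in analogy with the classical proof that the cup product on Hochschild cohomology is graded commutative. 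Square-zero of $B$ and the anticommutation with $b$ are already recorded after the definition of $B$, giving a well-defined operator on $HH_*(C)$.

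The main obstacle is the BV identity. The cleanest reformulation is: defining
\[
\{\alpha,\beta\} := (-1)^{|\alpha|}\bigl(B(\alpha\bullet\beta) - (B\alpha)\bullet\beta - (-1)^{|\alpha|}\alpha\bullet(B\beta)\bigr),
\]
\eqref{secondorder} is equivalent to the assertion that $\{-,-\}$ is a graded derivation of $\bullet$ in each slot. Unfolding each $B$ as a sum over cyclic rotations and each $\bullet$ as concatenation produces a double sum indexed by a cut in the $a$-word and a cut in the $b$-word. Grouping these pairs according to whether the cycled letter crosses the junction between the two words, and applying the Frobenius relation \eqref{frob} at each crossing, should reduce the defect to an explicit $b$-boundary. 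An alternative route is to dualize Tradler's original argument for the BV structure on the Hochschild cohomology of a finite-dimensional strict Frobenius algebra: the isomorphism $A \cong C[m]$ supplied by the Poincar\'e duality model transports his chain-level homotopies to our setting with only bookkeeping changes to signs and gradings. Either way, the substantive difficulty is combinatorial rather than conceptual, and all of it is channelled through the single identity \eqref{frob}.
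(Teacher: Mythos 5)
Your outline is correct and shares its logical skeleton with the paper's proof: both establish that $\bullet$ is a chain-level associative product, that it is commutative up to an explicit homotopy, and both reduce the seven-term identity to Getzler's characterization of a Batalin--Vilkovisky algebra as a Gerstenhaber algebra whose bracket is the deviation of $\Delta=B$ from being a derivation of $\bullet$ (this is exactly \cite[Proposition 1.2]{Ge94}, which the paper cites). The organizational difference is where the bracket comes from. The paper does not attack the Leibniz rule for the deviation bracket directly; instead it introduces the auxiliary pre-Lie-type operation $*$ of Lemma \ref{htpyofcomm}, which does double duty: equation (\ref{htpy}) exhibits it as the homotopy for commutativity of $\bullet$ (your ``cycling $\alpha$ through the positions of $\beta$'' is precisely this operator, with $\varepsilon(a_ib_0)$ pairing the crossed letter), and its graded commutator is taken as the definition of the bracket. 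The Gerstenhaber--Leibniz rule is then proved for that commutator via the explicit homotopy $h$ of Lemma \ref{lemma45}, and a separate homotopy $\phi+\psi$ in Lemma \ref{bvgersten} identifies the commutator bracket with the BV deviation on homology. Your direct approach --- expanding $B(\alpha\bullet\beta)$ as a double sum over cuts and pushing everything into a single $b$-boundary --- would in effect have to reproduce the composite of these two homotopies in one step; it is viable, but the factorization through $*$ is what keeps the combinatorics manageable, and it also yields the Gerstenhaber bracket in a manifestly antisymmetric chain-level form (the deviation bracket is not antisymmetric on chains, only on homology). Your alternative of transporting Tradler's result along $A\cong C[m]$ is consistent with the paper's Remark~\ref{theoremcoBV}-adjacent remark that the theorem is not new, but it would require the finite-dimensional strict Frobenius setting rather than the general DG open Frobenius algebra for which the theorem is stated, so the direct chain-level argument is the one that actually proves the statement as given.
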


Using the maps $\E$ and $\M$ in Connes' exact sequence
(\ref{Connes_exact_seq}),
define, for each integer $n\ge 2$,
\begin{eqnarray*}
c_n: HC_{*}(C)[m-2]^{\otimes n}&\la& HC_{*}(C)[m-2]\\
\a_1\otimes\cdots\otimes\a_n&\longmapsto&
(-1)^\epsilon \, \E(\M(\a_1)\bullet\cdots\bullet \M(\a_n)),
\end{eqnarray*}
where $\epsilon = (n-1)|\a_1|+(n-2)|\a_2|+\cdots+|\a_{n-1}|$.
It follows from a general result (see Proposition \ref{gravityalg}) that:
\begin{corollary}\label{corollarygravity}
The cyclic homology $(HC_{*}(C)[m-2],\{c_n\})$ is a gravity algebra (in the sense of the following definition).
\end{corollary}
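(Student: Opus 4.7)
The plan is to obtain Corollary \ref{corollarygravity} as a formal consequence of Theorem \ref{theoremBV} combined with Connes' exact sequence, packaged as Proposition \ref{gravityalg}. The latter is the general statement that, given a BV algebra $(V, \bullet, \Delta)$ and an auxiliary graded space $W$ equipped with linear maps $\E : V \to W$ and $\M : W \to V$ satisfying $\M\E = \Delta$ and $\E\M = 0$, the formulas
\[
c_n(\alpha_1, \ldots, \alpha_n) := (-1)^\epsilon\,\E\bigl(\M(\alpha_1) \bullet \cdots \bullet \M(\alpha_n)\bigr)
\]
endow $W$ (suitably shifted) with the structure of a gravity algebra. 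In our situation, $(V, \bullet, \Delta) = (HH_*(C)[m], \bullet, B)$ by Theorem \ref{theoremBV}, $W = HC_*(C)[m-2]$, and the maps $\E, \M$ are furnished by the Connes sequence (\ref{Connes_exact_seq}). The identification $\M\E = B$ is the standard characterization of the Connes operator as the composition of the inclusion $HH_* \hookrightarrow HC_*$ with the connecting homomorphism, while $\E\M = 0$ is exactness. The shift by $m-2$ (versus $m$ on $HH_*$) is forced by the fact that $\M$ has degree $+1$ and that cyclic homology sits two degrees below Hochschild homology in (\ref{Connes_exact_seq}).

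Granted Proposition \ref{gravityalg}, the corollary is immediate. The substantive work therefore lies in proving that proposition. My strategy is to apply the BV identity (\ref{secondorder}) iteratively to $\M(\alpha_1) \bullet \cdots \bullet \M(\alpha_n)$ in $V$, expressing $\Delta$ of this product as a sum of terms in which $\Delta$ acts either on a single factor or on two factors at a time (producing the induced Gerstenhaber bracket). Upon applying $\E$, both sides collapse dramatically: the left-hand side vanishes because $\E\Delta = \E\M\E = 0$, and every term in which $\Delta$ falls on a single $\M(\alpha_i)$ vanishes because $\Delta\M = \M\E\M = 0$. What survives is a family of relations among the operations $c_n$ which, with the prefactor $(-1)^\epsilon$, are precisely the defining relations of a gravity algebra.

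The principal obstacle is sign bookkeeping. The BV identity (\ref{secondorder}) already carries several Koszul signs, and its inductive extension to $n$-fold products introduces substantial combinatorial complexity. The degree shifts ($m$ for $HH_*$ and $m-2$ for $HC_*$) together with the particular form $\epsilon = (n-1)|\alpha_1| + (n-2)|\alpha_2| + \cdots + |\alpha_{n-1}|$ are calibrated precisely to absorb these signs into the canonical form of the gravity axioms; verifying this requires a careful line-by-line comparison of the antisymmetric expansions produced by the iterated BV identity against the defining relations of a gravity algebra.
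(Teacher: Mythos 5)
Your overall route is the same as the paper's: you deduce Corollary \ref{corollarygravity} from Theorem \ref{theoremBV} together with the general Proposition \ref{gravityalg}, whose hypotheses ($\E\circ\M=0$ and $\M\circ\E=\Delta=B$) are supplied by Connes' exact sequence (\ref{Connes_exact_seq}), and you propose to prove the proposition by iterating the BV identity (\ref{secondorder}) to expand $\Delta(\M(\a_1)\bullet\cdots\bullet\M(\a_n))$ and then applying $\E$. This is exactly the paper's strategy, including the observation that the degree shift $m-2$ and the sign $(-1)^\epsilon$ are what calibrate the result to the gravity axioms.

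There is, however, a gap in the mechanism as you describe it: it only produces the relations (\ref{def_of_brackets}) with $l=0$. Applying $\E$ to the iterated identity does make the left-hand side vanish (since $\E\circ\Delta=\E\circ\M\circ\E=0$) and kills every term where $\Delta$ lands on a single factor (since $\Delta\circ\M=\M\circ\E\circ\M=0$), so what survives is $\sum_{i<j}\pm\, c_{n-1}(c_2(\a_i\ot\a_j)\ot\a_1\ot\cdots\ot\widehat{\a_i}\ot\cdots\ot\widehat{\a_j}\ot\cdots\ot\a_n)=0$. But the generalized Jacobi identities with $l>0$ have the nonzero right-hand side $\{\{x_1,\dots,x_k\},y_1,\dots,y_l\}$, and a procedure whose every output is a relation equal to zero cannot yield them. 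The paper's proof supplies the missing step: one first multiplies the expanded identity (\ref{deriv}) on the right by $y_1\bullet\cdots\bullet y_l$ with $y_j=\M(\b_j)$, and only then applies $\E$. The left-hand side $\E\big(\Delta(x_1\bullet\cdots\bullet x_n)\bullet y_1\bullet\cdots\bullet y_l\big)$ is then no longer zero: since $\Delta=\M\circ\E$, the factor $\Delta(x_1\bullet\cdots\bullet x_n)$ becomes $\M$ applied to (a sign times) $c_n(\a_1\ot\cdots\ot\a_n)$, so the left-hand side is $c_{l+1}(c_n(\a_1\ot\cdots\ot\a_n)\ot\b_1\ot\cdots\ot\b_l)$, which is precisely the required right-hand side of the $l>0$ axiom. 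You would need to add this multiplication step to complete the argument.
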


\begin{definition}[Gravity algebra]\label{def_of_Lieinfty}
A \emph{gravity algebra} is a 
graded vector space $V$ with
a sequence of graded skew-symmetric operators
$$\{x_1,\cdots,x_k\}:V^{\otimes k}\longrightarrow V,\quad k=2,3,\cdots$$ of degree
$2-k$, such that they satisfy the following generalized Jacobi
identities: 
\begin{eqnarray}&&\sum_{1\le i<j\le
k}(-1)^{\epsilon(i,j)}\{\{x_i,x_j\},x_1,\cdots,\hat x_i,\cdots,\hat
x_j,\cdots,
x_k, y_1,\cdots,y_l\}\label{def_of_brackets}\\
&=&\left\{\begin{array}{ll}\{\{x_1,\cdots,x_k\},y_1,\cdots,y_l\},&l>0,\\
0,&l=0.\end{array}\right.\nonumber\end{eqnarray} where
$\epsilon(i,j)=(|x_1|+\cdots+|x_{i-1}|)|x_i|+(|x_1|+\cdots+|x_{j-1}|)|x_j|+|x_i||x_j|$.
\end{definition}

\subsection{}
In this paper, by the \emph{reduced Hochschild homology}  $\hh_*(C)$
of $C$, we mean the homology of
$$\cc_*(C) := CC_*(C)/C = \prod_{n=1}^\infty C\otimes (\Sigma \ol{C})^{\ot n} .$$ 
By the \emph{reduced cyclic homology} $\hc_*(C)$ of $C$, we mean the homology
of $$\cc_*(C)[u] = CC_*(C)[u]/C[u].$$
As above, we have $\E:\hh_*(C)\la \hc_*(C)$ and $\M:\hc_*(C)\la\hh_{*+1}(C)$.

Define a coproduct
$$
\vee: \cc_*(C)\longrightarrow \cc_*(C) \ot \cc_*(C)$$
by
\begin{equation}
\vee( a_0 [a_1|\cdots|a_n]):= \sum_{i=2}^{n-1} (-1)^{\epsilon(i)}\,
(a_0a_i)' [a_1|\cdots|a_{i-1}] 
\otimes (a_0a_i)'' [a_{i+1}|\cdots|a_n]
\end{equation}
where $\epsilon(i) = |a_0|+(1+|a_i|+|(a_0a_i)''|)
|[a_1|...|a_{i-1}]|$.

Our main result is the following.
\begin{theorem}\label{theoremcoBV}
The reduced Hochschild homology $\hh_{*}(C)[1-m]$ is a 
Batalin-Vilkovisky coalgebra with differential $B$ 
and coproduct $\vee$.
\end{theorem}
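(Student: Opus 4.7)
The plan is to mimic Tradler's proof of Theorem \ref{theoremBV}, exchanging the roles of the product $\cdot$ and the coproduct $\Delta$ on $C$, and carrying everything out directly on the reduced complex $\cc_*(C)$.

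\emph{Step 1 (well-definedness and chain map).} First I would verify that $\vee$ descends to a well-defined map $\cc_*(C)\to\cc_*(C)\ot\cc_*(C)$. The restriction of the summation index to $i\in\{2,\dots,n-1\}$ guarantees that both tensor factors in the output contain at least one bar, so $\vee$ factors through the quotient by $C$ on both sides. I would then show that $\vee$ commutes with $b$ (with the standard Koszul sign on the tensor product). This amounts to matching, term by term, the three parts of $b$ in (\ref{differentialb})---the internal differential $d$, the internal coproduct terms, and the cyclic terms involving $a_0'$ and $a_0''$---against the corresponding three parts of $b$ acting on each side of the $\vee$-split word. The Frobenius identity (\ref{frob}) intervenes exactly where $b$ acts on the two factors $a_0$ and $a_i$ that $\vee$ couples together.

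\emph{Step 2 (coassociativity and cocommutativity).} For coassociativity I would iterate $\vee$ and split the input word at two distinct indices $i<j$; the two bracketings $(\vee\ot 1)\vee$ and $(1\ot\vee)\vee$ agree at the chain level using the coassociativity of $\Delta$ on $C$ together with (\ref{frob}) to distribute $a_0$ across the three resulting pieces. Cocommutativity fails at the chain level by a cyclic rotation of the bars exchanging the two output factors, but this rotation is a $B$-coboundary, so cocommutativity is restored after passage to $\hh_*(C)$.

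\emph{Step 3 (the BV coalgebra identity).} Dualizing (\ref{secondorder}) gives the identity asserting that $\vee$ is a second-order cooperation with respect to $B$. Writing $\vee^{(3)}:=(\vee\ot 1)\vee$ for the iterated coproduct, the assertion is that
\[
\vee^{(3)}\circ B \;-\; \sum_{i=1}^{3}\bigl(1^{\ot(i-1)}\ot B\ot 1^{\ot(3-i)}\bigr)\circ \vee^{(3)}
\]
agrees with a sum of three terms in which $B$ has been absorbed into one of the two applications of $\vee$---the dual of the three pair-terms $\Delta(a\bullet b)\bullet c$, $a\bullet\Delta(b\bullet c)$ and $b\bullet\Delta(a\bullet c)$ in (\ref{secondorder}). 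I expect this step to be the main obstacle: $B$ cyclically permutes bars while extracting the scalar $\varepsilon(a_0)$, whereas $\vee$ splits bars using $\Delta$, so enumerating all their compositions and tracking the cancellations is lengthy. After pairing, the surviving terms should reduce to a single application of (\ref{frob}) combined with the coassociativity and cocommutativity of $\Delta$ on $C$; the degree shift $[1-m]$ and the signs $\epsilon(i)$ built into the definition of $\vee$ are tuned so that the resulting parities match exactly.
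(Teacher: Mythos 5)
Your Steps 1 and 2 track the paper's Lemmas \ref{coalgebra} and \ref{cocommute}: the restriction $2\le i\le n-1$ does make $\vee$ land in $\cc_*(C)\ot\cc_*(C)$, the coderivation property of $b$ does come down to cancellations governed by (\ref{frob}), and cocommutativity does hold only up to homotopy. One slip there: the failure of cocommutativity must be exhibited as a $b$-coboundary (a chain homotopy for the Hochschild differential $b$, which is what computes $\hh_*$), not a ``$B$-coboundary''; the paper's homotopy $h$ in Lemma \ref{cocommute} satisfies $bh+hb=\tau\circ\vee-\vee$.

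The genuine gap is in Step 3. The seven-term BV coalgebra identity of Definition \ref{coBVdef} does \emph{not} hold at the chain level, so the direct enumeration you describe --- pairing off the compositions of $B$ with $\vee^{(3)}$ until ``the surviving terms reduce to a single application of (\ref{frob})'' --- will leave an unkilled residue; you would have to produce an explicit chain homotopy for the full three-output identity, which you have neither constructed nor flagged as necessary. The paper avoids this entirely by factoring the problem through a Gerstenhaber coalgebra structure: the cocommutativity homotopy $h$ is not discarded but antisymmetrized into a Lie cobracket $S=h-\tau\circ h$ (Lemma \ref{coalgs}), the co-Jacobi identity and the coLeibniz compatibility of $S$ with $\vee$ are each verified up to explicit homotopies (Theorem \ref{thmcogerst}), and a further explicit homotopy $\theta$ shows $S\simeq\vee\circ B-B\circ\vee$ on homology. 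The BV coalgebra identity then follows \emph{formally} from the dual of Getzler's Proposition 1.2 in \cite{Ge94}, with no three-fold chain-level computation required. Without that structural reduction --- in particular, without recognizing that the homotopy $h$ from your Step 2 is the source of the cobracket --- your Step 3 as written would not close.
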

Similarly to above, define
$s_n: \hc_{*}(C)[2-m]\la \hc_{*}(C)[2-m]^{\otimes n}$ by
$$s_n(\a):=(\E\ot\cdots\ot \E)\circ(\vee\ot id^{\ot n-2})\circ\cdots\circ (\vee\ot id)\circ
\vee\circ \M(\a),$$
for any $\a\in \hc_{*}(C)[2-m]$. We have:
\begin{corollary}\label{corollarycogravity}
The reduced cyclic homology $(\hc_{*}(C)[2-m],\{s_n\})$ is a gravity coalgebra. 
\end{corollary}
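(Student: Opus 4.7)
The plan is to deduce Corollary \ref{corollarycogravity} from Theorem \ref{theoremcoBV} in the same formal manner in which Corollary \ref{corollarygravity} was deduced from Theorem \ref{theoremBV} via Proposition \ref{gravityalg}, namely, by establishing and invoking the coalgebra-theoretic dual of Proposition \ref{gravityalg}. This dual proposition should assert that any Batalin-Vilkovisky coalgebra $(V,\vee,B)$ equipped with a Connes-type long exact sequence with maps $\E$ of degree $0$ and $\M$ of degree $+1$ satisfying $\M\E=B$ gives rise, via the iterated formula
\[
s_n = \E^{\otimes n}\circ(\vee\otimes id^{\otimes n-2})\circ\cdots\circ(\vee\otimes id)\circ\vee\circ \M,
\]
to a gravity coalgebra structure on the cyclic side (with the appropriate degree shift).

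Given such a general result, the corollary is immediate: we apply it to the reduced BV coalgebra $\hh_*(C)[1-m]$ of Theorem \ref{theoremcoBV}, together with the reduced Connes exact sequence linking $\hh_*$ and $\hc_*$, noting that all of the formal inputs we need ($\E\M=0$, $\M\E=B$, cocommutativity of $\vee$ up to $B$, and the BV coalgebra identity) are already in place from the preceding sections.

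To verify the general statement I would proceed in two steps. First, I would check that each $s_n$ is graded skew-symmetric as a map $\hc_*(C)[2-m]\to\hc_*(C)[2-m]^{\otimes n}$. This follows because $\vee$ is cocommutative only up to a chain homotopy involving $B$, and the error terms are annihilated upon applying $\E$ in each tensor factor, using $\E B=\E\M\E=0$ (from $\E\M=0$). Second, I would verify the generalized co-Jacobi identities dual to (\ref{def_of_brackets}). The mechanism is: expand both sides using the definition of $s_n$; repeatedly apply the dual BV coalgebra seven-term relation (the coproduct analog of (\ref{secondorder})) to rewrite the iterated coproduct $\vee^{n-1}$; then use $\E B=0$ and $B\M=\M\E\M=0$ to eliminate the boundary terms, leaving matching expressions on the two sides.

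The main obstacle will be the sign bookkeeping. The dual BV coalgebra identity contains seven terms, each weighted by Koszul signs depending on the degrees of the three output tensor factors; these must be propagated through the nested iteration of $\vee$ and through the skew-symmetrization already present in the gravity co-Jacobi identity. Keeping the signs consistent with Definition \ref{def_of_Lieinfty} after transporting everything through the shift $[2-m]$ (where each factor $\E$ contributes a relative grading discrepancy between the Hochschild and cyclic sides) is where most of the work lies; once this is handled, the gravity coalgebra axioms follow formally from the BV coalgebra structure of Theorem \ref{theoremcoBV} together with exactness of (\ref{Connes_exact_seq}).
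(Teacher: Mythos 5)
Your proposal follows essentially the same route as the paper: Section~6 proves exactly the dual of Proposition~\ref{gravityalg} (for an abstract Batalin--Vilkovisky coalgebra $(V,\vee,\Delta)$ with maps $\E,\M$ satisfying $\E\circ\M=0$ and $\M\circ\E=\Delta$, the operators $s_n$ make $W[1]$ a gravity coalgebra), by inductively iterating the BV coalgebra identity to obtain a closed formula for $\vee_n\circ\Delta$ and then killing the extraneous terms with $\E\circ\M=0$ and $\M\circ\E=\Delta$, and then applies it to $\hh_*(C)[1-m]$ and $\hc_*(C)[2-m]$. Your only superfluous worry is the skew-symmetry of $s_n$ ``up to homotopy'': since the general proposition is applied on homology, where Theorem~\ref{theoremcoBV} gives a genuinely cocommutative $\vee$, no chain-level homotopy argument is needed there.
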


In the above two statements, the Batalin-Vilkovisky coalgebra and gravity coalgebra are defined as a dual version
of the corresponding algebras (see Definitions~\ref{coBVdef} and \ref{def_of_gravity_coalg}). Note that the Batalin-Vilkovisky
algebra and the gravity algebra structures
of $HH_*(C)$ and $HC_*(C)$ descend to $\widetilde{HH}_*(C)$ and $\widetilde{HC}_*(C)$ respectively. Thus, we
obtain both Batalin-Vilkovisky algebra and coalgebra 
structures on $\hh_*(C)$, and gravity algebra and coalgebra structures on $\hc_*(C)$.

\subsection{}
Let $A$ be a Poincar\'e duality model for $M$ and $C=A^\vee$.
Let $LM$ be the free loop space of $M$.
From Jones \cite{Jo87}, one has isomorphisms 
$$H_*(LM,M) \cong \hh_*(C) , \quad H^{S^1}_*(LM,M) \cong \hc_*(C).$$ 
Following Chas and Sullivan \cite{CS02}, 
we call $H_*(LM,M)$ the reduced homology of the
free loop space, and $H^{S^1}_*(LM,M)$ the reduced equivariant homology of
the free loop space.
As a consequence, 
the choice of a Poincar\'e duality model for $M$ gives the 
reduced homology of the free loop space the structure of a
Batalin-Vilkovisky coalgebra, and the reduced equivariant homology
of the free loop space the structure of a gravity coalgebra.
In string topology, the loop product $\bullet$ 
was first introduced by Chas and Sullivan in \cite{CS99}; see also \cite{CJ}.
The coproduct $\vee$ was introduced by Sullivan in \cite{Su03}. 
The operators $c_n$ and the operators $s_n$ were first 
introduced by Chas and Sullivan in \cite{CS02} and discussed further in \cite{Su03}; see also \cite{Westerland}.

Batalin-Vilkovisky algebras and gravity algebras were studied by Getzler (\cite{Ge94,Ge1,Ge2}) in his 
works on topological conformal field theories (TCFT).
He showed that a (genus zero) TCFT (respectively, an equivariant TCFT) with one output is the same as a Batalin-Vilkovisky algebra (respectively, a gravity algebra). If we consider multiple
inputs and outputs, we then obtain both Batalin-Vilkovisky algebra and coalgebra (respectively, gravity algebra and coalgebra). Our construction gives an algebraic proof that
string topology is a part of a (genus zero) TCFT. 
We expect that the constructions above can be 
generalized to homotopy versions of DG open
Frobenius algebras.

\begin{remark} 
We emphasize that Sullivan's coproduct $\vee$ is not the same as the
loop coproduct introduced by Cohen and Godin in \cite{CG}; see also \cite{Godin}. 
\end{remark}
 
\begin{remark}
Theorem \ref{theoremBV} is not new;  it is well known that
the Hochschild cohomology of a Frobenius algebra 
has the structure of a Batalin-Vilkovisky algebra; see, for example,
\cite{Menichi} and \cite{Tradler}.
However, notice that the formulas we give above are really explicit 
and simple.
As far as we are aware, Theorem \ref{theoremcoBV} is new. 
\end{remark}

\subsection{}
This paper is organized as follows. We recall the definitions
of Batalin-Vilkovisky algebras and gravity algebras in 
Section 2 and the proof of Theorem \ref{theoremBV}
in Section 3. We give the proof of Corollary \ref{corollarygravity}
in Section 4, the proof of Theorem \ref{theoremcoBV} in 
Section 5, and the proof of Corollary \ref{corollarycogravity}
in Section 6. We 
shall adopt Koszul's rule for signs, that is, whenever we switch two elements
$a\otimes b\mapsto b\otimes a$, we put $(-1)^{|a||b|}$ in front of $b\otimes a$
and write $\pm b\ot a$.

\subsection*{Acknowledgments}
We thank F. Eshmatov for many discussions.
W.L.G. was partially supported by the NSF grant DMS-0726154.

\section{\bf Batalin-Vilkovisky algebras and gravity algebras}

\subsection{}We first recall some properties of Batalin-Vilkovisky algebras and gravity algebras.

\begin{lemma}Let $(V,\bullet,\Delta)$ be a Batalin-Vilkovisky
algebra. Define 
$$\{\, ,\, \}: V\ot V\la V $$
by
$$\{a,b\}:=(-1)^{|a|}\Delta(a\bullet b)-
(-1)^{|a|}(\Delta a)\bullet b- a\bullet(\Delta b).$$ 
Then $(V[-1], \{\, ,\, \}, \Delta)$ is a DG Lie algebra.
\end{lemma}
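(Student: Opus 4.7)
The plan is to verify the three axioms defining a DG Lie algebra on $V[-1]$ with bracket $\{\,,\,\}$ and differential $\Delta$: graded skew-symmetry, the graded Jacobi identity, and the fact that $\Delta$ is a graded derivation of $\{\,,\,\}$ (together with $\Delta^2 = 0$, which holds by hypothesis). Because $\Delta$ raises degree by one on $V$, the bracket $\{\,,\,\}$ also has degree $+1$ on $V$, so after the shift to $V[-1]$ it becomes a degree-zero operation, as required for a Lie bracket.

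First I would check graded skew-symmetry on $V[-1]$: expand $\{b,a\}$ by the defining formula, apply graded commutativity $a\bullet b = (-1)^{|a||b|}\, b\bullet a$ to each of the three summands, and collect signs to obtain $\{b,a\} = -(-1)^{(|a|-1)(|b|-1)}\{a,b\}$, which is antisymmetry in the shifted grading.

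The main step, and the main obstacle, is the graded Jacobi identity. I would use the classical Koszul-type observation that $\{\,,\,\}$ is precisely the obstruction to $\Delta$ being a derivation of $\bullet$, so that the second-order BV identity (\ref{secondorder}) should encode the Jacobi identity for $\{\,,\,\}$. Concretely, I would expand $\{\{a,b\},c\}$ by substituting the definition of the inner bracket and then of the outer bracket, producing a sum of terms of two types: those where $\Delta$ is applied to a single letter among $a,b,c$, and those where $\Delta$ is applied to a product of two or three letters. Performing the same expansion for the graded cyclic permutations and summing, every occurrence of $\Delta(a\bullet b\bullet c)$ is rewritten using (\ref{secondorder}); the resulting terms then cancel pairwise by associativity and graded commutativity of $\bullet$, yielding the Jacobi identity. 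This is where essentially all of the sign bookkeeping lives; it is routine in spirit but tedious to execute.

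Finally, the derivation property of $\Delta$ with respect to the bracket follows quickly: applying $\Delta$ to the definition of $\{a,b\}$, the term $(-1)^{|a|}\Delta^2(a\bullet b)$ vanishes by $\Delta^2=0$, and the remaining two terms rearrange, by a second application of the defining formula, into $\{\Delta a,b\} \pm \{a,\Delta b\}$ with the sign required in the shifted grading. Together with $\Delta^2 = 0$, this establishes that $(V[-1],\{\,,\,\},\Delta)$ is a DG Lie algebra.
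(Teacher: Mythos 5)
Your overall strategy --- verify graded skew-symmetry, the Jacobi identity, and the compatibility of $\Delta$ with the bracket directly from graded commutativity of $\bullet$, the seven-term identity (\ref{secondorder}), and $\Delta\circ\Delta=0$ --- is the standard one; the paper itself gives no argument here but simply cites \cite[Proposition 1.2]{Ge94}, whose proof is exactly such a direct verification. Your skew-symmetry and derivation-property steps are described correctly (and you are right that the derivation property needs $\Delta^2=0$).

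The description of the Jacobi step, however, does not match what the expansion actually produces, and as written the recipe would not go through. No term of the form $\Delta(a\bullet b\bullet c)$ with no interior $\Delta$ ever occurs in the cyclic sum of $\{\{a,b\},c\}$; what occurs instead are (i) terms such as $\Delta\bigl((\Delta a)\bullet b\bullet c\bigr)$, to which (\ref{secondorder}) does apply, with one factor equal to $\Delta a$; (ii) terms such as $\Delta\bigl(\Delta(a\bullet b)\bullet c\bigr)$, which are $\Delta$ of a product of only \emph{two} factors and hence cannot be rewritten by (\ref{secondorder}) at all; and (iii) terms $\Delta^{2}(a\bullet b)\bullet c$ and, after applying (\ref{secondorder}) to the terms in (i), terms $\Delta^{2}(a)\bullet b\bullet c$, which are killed not by any pairwise cancellation via associativity and commutativity but by the hypothesis $\Delta\circ\Delta=0$. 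So $\Delta^{2}=0$ enters the Jacobi identity in an essential way, not merely as a separate axiom to record at the end, and the two-factor terms in (ii) require a further cancellation across the cyclic sum. The cleaner route, which is essentially Getzler's, is to first observe that (\ref{secondorder}) is equivalent to the Leibniz rule (\ref{gerstiii}) for the bracket, and then deduce the Jacobiator's vanishing from the Leibniz rule together with $\Delta^{2}=0$ (for instance by checking that the Jacobiator is simultaneously symmetric and antisymmetric in two of its arguments). Since this is where the entire content of the lemma lies, you should either carry out that bookkeeping explicitly or restructure the argument through the Leibniz rule.
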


\begin{proof}See \cite[Proposition 1.2]{Ge94}.\end{proof}

More generally, one has the following result 
proved by Getzler (see \cite[Theorem 4.5]{Ge1}
and \cite[\S 3.4]{Ge2}).

\begin{theorem}\label{BVtoLieinfty}
Let $(V,\bullet,\Delta)$ be a Batalin-Vilkovisky algebra.
Define, for $k=2, 3,...$,
$$ \{\, ,\cdots,\, \}: V^{\ot k}\la V $$ by
$$\{a_1,\cdots, a_k\} := 
(-1)^\epsilon \big(
\Delta(a_1a_2\cdots a_k)-\sum_{i=1}^k(-1)^{|a_1|+\cdots+|a_{i-1}|}
a_1\cdots(\Delta a_i)\cdots a_k \big) $$
where $\epsilon=(k-1)|a_1|+(k-2)|a_2|+\cdots+|a_{k-1}|$.
Then $V[-1]$ is a DG gravity algebra 
with differential $\Delta$ and brackets $\{a_1,\cdots,a_k\}$.
\end{theorem}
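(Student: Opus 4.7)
The plan is to verify, one by one, the three conditions in Definition~\ref{def_of_Lieinfty} for the higher brackets $\{a_1,\ldots,a_k\}$ defined in the statement. That $\Delta^2=0$ and that $\Delta$ is the differential of the resulting structure is immediate from the BV hypothesis, so the items to check are the degree, graded skew-symmetry, and the generalized Jacobi identity.

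First I would handle the formal properties. Since $\Delta$ raises degree by one and $\bullet$ preserves it, $\{a_1,\ldots,a_k\}$ has degree $|a_1|+\cdots+|a_k|+1$ in $V$; after the shift to $V[-1]$, where each element's degree is raised by one, this translates into the correct degree $2-k$ relative to the inputs. Graded skew-symmetry on $V[-1]$ would follow from graded commutativity of $\bullet$: the product $a_1\cdots a_k$ and the sum $\sum_i (-1)^{|a_1|+\cdots+|a_{i-1}|}a_1\cdots(\Delta a_i)\cdots a_k$ are both symmetric in $V$ with Koszul signs, and the prefactor $(-1)^\epsilon$ is precisely the sign needed so that after raising all degrees by one the result becomes antisymmetric. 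It would suffice to verify this for adjacent transpositions.

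The heart of the proof is the generalized Jacobi identity~(\ref{def_of_brackets}). The approach is to unwind every outer bracket via the defining formula so that both sides are expressed entirely in terms of $\Delta$ and $\bullet$, and then to produce term-by-term cancellations using repeated applications of the second-order BV axiom~(\ref{secondorder}). The crucial case is $l=0$: expanding the sum $\sum_{i<j}(-1)^{\epsilon(i,j)}\{\{x_i,x_j\},x_1,\ldots,\hat x_i,\ldots,\hat x_j,\ldots,x_k\}$ produces three families of terms, depending on how the two $\Delta$'s interact with the product structure. Terms in which two $\Delta$'s stack vanish by $\Delta^2=0$; terms in which the two $\Delta$'s act on disjoint subproducts cancel in pairs across different choices of $(i,j)$ by graded commutativity of $\bullet$; and the remaining cross terms collapse to zero by an iterated form of (\ref{secondorder}) applied to the full product $x_1\cdots x_k$. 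The case $l>0$ then reduces to the $l=0$ case by using the biderivation property of the two-bracket established in the preceding lemma, which governs how the extra factors $y_j$ interact with $\Delta$.

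The main obstacle is the combinatorial and sign bookkeeping needed to make the cancellations go through cleanly. A useful intermediate step would be to iterate the second-order BV axiom to derive an \emph{expansion formula} for $\Delta(a_1\cdots a_n)$ as a sum, over subsets $I\subseteq\{1,\ldots,n\}$, of terms of the form $\pm\{a_i : i\in I\}\cdot\prod_{j\notin I}a_j$ with prescribed Koszul signs. Once such a formula is in place, the generalized Jacobi relation reduces to a purely combinatorial identity about sums of signs indexed by pairs of subsets; this is essentially Getzler's argument in \cite[Theorem 4.5]{Ge1}.
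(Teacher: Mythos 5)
The paper does not actually prove this theorem: it is stated as a quotation of Getzler's result, with the proof delegated to \cite[Theorem 4.5]{Ge1} and \cite[\S 3.4]{Ge2}, so there is no in-text argument to compare yours against line by line. What can be said is that your outline is the standard proof and is consistent with the one identity from this circle that the paper does derive, namely (\ref{deriv}) in the proof of Proposition \ref{gravityalg}. Your formal steps (the degree count $2-k$ after the shift, and skew-symmetry from graded commutativity of $\bullet$ together with the prefactor $(-1)^\epsilon$) are fine. For the generalized Jacobi identity (\ref{def_of_brackets}) I would, however, reorganize your central step: the decisive consequence of iterating (\ref{secondorder}) is not the general subset expansion you propose (which is essentially the definition of derived brackets rearranged) but the collapse of every higher bracket onto binary ones,
\begin{equation*}
\{a_1,\cdots,a_k\}=\sum_{1\le i<j\le k}\pm\,\{a_i,a_j\}\bullet a_1\cdots\widehat{a_i}\cdots\widehat{a_j}\cdots a_k ,
\end{equation*}
which is exactly (\ref{deriv}) rewritten. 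Once this is in hand, every bracket occurring in (\ref{def_of_brackets}) becomes a sum of products of binary brackets, and the identity reduces to the Jacobi identity and the Leibniz rule (\ref{gerstiii}) for $\{\,,\,\}$, both already supplied by \cite[Proposition 1.2]{Ge94}. This replaces your taxonomy of ``three families of cancelling terms,'' which as stated is too loose: for instance, the terms you claim ``vanish by $\Delta^2=0$'' include contributions such as $\Delta\big((\Delta a_i)\bullet a_j\big)\bullet(\cdots)$ coming from $\Delta\{a_i,a_j\}$, which are not literal compositions $\Delta\circ\Delta$ and do not vanish individually---they must cancel against other cross terms. With that correction and the binary-reduction lemma in place, your plan is sound and is, as you say, Getzler's argument; your treatment of the $l>0$ case via the Leibniz rule is then exactly what the reduction buys you.
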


A DG gravity algebra is a gravity algebra with a differential 
commuting with all the brackets.
Thus, for a Batalin-Vilkovisky algebra
$(V,\bullet,\Delta)$, its homology $H(V,\Delta)[-1]$ has a
gravity algebra structure. 
Note that taking $k=3$ and $l=0$ in (\ref{def_of_brackets})
gives the Jacobi identity. 
Hence, a gravity algebra has a Lie algebra structure.

\subsection{}
Analogously, we may introduce the notion of a Batalin-Vilkovisky coalgebra and the notion of a gravity coalgebra.

\begin{definition}[Batalin-Vilkovisky coalgebra]\label{coBVdef}
A \emph{Batalin-Vilkovisky coalgebra} is a 
graded cocommutative coalgebra $(V,\vee)$ together with a
linear map $\Delta: V_*\to V_{*+1}$ such that $\Delta\circ\Delta
=0$, and 
\begin{multline*}
(\Delta\otimes id^{\otimes 2}+id\otimes\Delta\otimes id+id^{\otimes 2}\otimes \Delta)\circ (\vee\otimes id)\circ \vee(a)  
=(\tau^2+\tau+id)\circ
(\vee\circ\Delta\otimes id)\circ\vee(a) \\
+ (\vee\ot id)\circ \vee \circ \Delta(a),
\end{multline*}
for all $a\in V$, where $\tau$ is the cyclic permutation $\tau:a\otimes b\otimes c\mapsto c\otimes a\otimes b$.
\end{definition}

Similarly to the Batalin-Vilkovisky algebra case, the chain complex $(V,\Delta)$ is a DG gravity coalgebra:

\begin{definition}[Gravity coalgebra]\label{def_of_gravity_coalg}
A \emph{gravity coalgebra} is a 
graded vector space $V$ with a sequence of 
graded skew-symmetric operators
$$m_k:V\longrightarrow V^{\otimes k},\quad k=2, 3, 4,\cdots$$
of degree $2-k$, such that
\begin{equation}\label{coLieinftyidentity}S_{2,k-2}\circ (m_2\otimes id^{\otimes k-2}) \circ m_{k-1+l}=(m_k\otimes id^{\otimes l})\circ m_{l+1}:V\longrightarrow V^{k+l},
\end{equation}
where the range of the mapping $(m_2\otimes id^{\otimes k-2}) \circ m_{k-1+l}:V\to V^{k+l}$ is identified with 
$V^{\otimes 2}\otimes V^{\otimes k-2}\otimes V^{\otimes l}$ and $S_{2,k-2}$ is the shuffle product $V^{\otimes 2}\otimes V^{\otimes k-2}\to V^{\otimes k}$, and if $l=0$, we set $m_1=0$.
\end{definition}

\begin{theorem}Let $(V,\vee,\Delta)$ be a Batalin-Vilkovisky coalgebra. For any $x\in V$, let
$$\vee_k(x):=(\vee\otimes id^{\otimes k-2})\circ\cdots\circ 
(\vee\ot id)\circ\vee(x)
= \sum x_1\ot x_2\ot \cdots \ot x_k,$$
and let
$$s_k(x):=  \sum (-1)^{(k-1)|x_1|+(k-2)|x_2|+\cdots+|x_{k-1}|}\Big(
\vee_k(\Delta x) -
\sum_{i=0}^{k-1} \big(
id^{\otimes i}\otimes \Delta\otimes id^{\otimes k-i-1}\big)\circ \vee_k(x) \Big) ,$$ 
for $k=2,3,\cdots.$
Then $V[1]$ is a DG gravity coalgebra with differential
$\Delta$ and cobrackets $\{s_n\}$.
In particular, $(V[1],s_2,\Delta)$ is a DG Lie coalgebra.
\end{theorem}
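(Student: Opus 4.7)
The plan is to mirror, at the coalgebra level, Getzler's proof of Theorem~\ref{BVtoLieinfty}. The Batalin-Vilkovisky coalgebra axiom in Definition~\ref{coBVdef} is precisely the dualization of the second-order identity (\ref{secondorder}), and all subsequent manipulations involve only the Koszul-signed symmetric structure, so each algebraic identity in Getzler's argument transposes, term by term, to a coalgebraic identity. Write
$$\Phi_k(x):=\vee_k(\Delta x)-\sum_{i=0}^{k-1}\bigl(id^{\otimes i}\otimes\Delta\otimes id^{\otimes k-i-1}\bigr)\circ\vee_k(x),$$
so that $s_k$ is $\Phi_k$ twisted by the Koszul sign $(-1)^{(k-1)|x_1|+(k-2)|x_2|+\cdots+|x_{k-1}|}$ coming from the shift $V\rightsquigarrow V[1]$.

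First I would verify graded skew-symmetry of $s_k$ on $V[1]$. The iterated coproduct $\vee_k$ is graded symmetric by cocommutativity and coassociativity of $\vee$; the terms $(id^{\otimes i}\otimes\Delta\otimes id^{\otimes k-i-1})\circ\vee_k$ are exchanged by adjacent transpositions, and the only obstruction to naive (skew-)symmetry is the commutation of $\Delta$ past a neighboring tensor factor, which is corrected exactly by the $(\tau^2+\tau+id)\circ(\vee\circ\Delta\otimes id)\circ\vee$ term in the BV coalgebra axiom. The shift signs are tailored so that the graded symmetry of $\vee_k$ produces graded skew-symmetry of $s_k$.

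The main step is to establish an iterated form of the BV coalgebra axiom. By induction on $k$, I would prove
$$\Phi_k(x)=\sum_{0\le i<j\le k-1}(-1)^{\varepsilon(i,j)}\,\sigma_{i,j}\circ\bigl((\vee\circ\Delta)\otimes id^{\otimes k-2}\bigr)\circ\vee_{k-1}(x),$$
where $\sigma_{i,j}$ is the graded shuffle that places the two output factors of $\vee\circ\Delta$ in positions $i$ and $j$. The base case $k=2$ is Definition~\ref{coBVdef}. For the inductive step, one applies $\vee\otimes id^{\otimes k-2}$ to the formula for $\Phi_{k-1}$, uses coassociativity to rewrite the result in terms of $\vee_k$, and absorbs the new ``$\Delta$ past $\vee$'' corrections using the $k=2$ identity. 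Substituting this iterated formula into the two sides of (\ref{coLieinftyidentity}) and reindexing shuffles yields the generalized Jacobi identity for $\{s_n\}$, in direct parallel with Getzler's computation.

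Finally, that $\Delta$ is compatible with each $s_k$ (making $V[1]$ a DG gravity coalgebra) is immediate from $\Delta^2=0$ and the construction of $s_k$ as a graded commutator of $\Delta$ with $\vee_k$; specializing $k=2$ recovers the DG Lie coalgebra statement. The main obstacle is purely combinatorial: tracking Koszul signs and the shuffles $\sigma_{i,j}$ through the inductive derivation of the iterated BV coalgebra identity. Everything else is formal, being the dualization of the algebra argument.
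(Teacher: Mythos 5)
Your overall strategy---transpose Getzler's proof of Theorem \ref{BVtoLieinfty} term by term to the coalgebra side---is exactly what the paper does; its entire proof consists of the remark that the argument is ``completely dual'' to that theorem. So the plan is the right one. However, the one identity you actually commit to, the ``iterated form of the BV coalgebra axiom,'' is false as stated, and the error is a combinatorial coefficient rather than a sign. You claim
$$\Phi_k(x)=\sum_{i<j}\pm\,\sigma_{i,j}\circ\bigl((\vee\circ\Delta)\otimes id^{\otimes k-2}\bigr)\circ\vee_{k-1}(x),
\qquad
\Phi_k:=\vee_k\circ\Delta-\sum_{i}\bigl(id^{\otimes i}\otimes\Delta\otimes id^{\otimes k-i-1}\bigr)\circ\vee_k .$$
Dualized to the algebra side (suppressing Koszul signs), your right-hand side is $\sum_{i<j}\Delta(x_ix_j)\cdot(\mbox{rest})$ while your left-hand side is $\Delta(x_1\cdots x_k)-\sum_i x_1\cdots(\Delta x_i)\cdots x_k$. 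The correct iteration of (\ref{secondorder}) is the paper's (\ref{deriv}), whose $\Delta$-insertion sum carries the coefficient $(k-2)$, not $1$; its coalgebraic transpose is precisely the paper's identity (\ref{coBVidentity}) in Section 6. Consequently your formula is off by $(k-1)\sum_i\pm\,(id^{\otimes i}\otimes\Delta\otimes id^{\otimes k-i-1})\circ\vee_k(x)$. Already at $k=2$ your right-hand side reduces to $\vee\circ\Delta$, whereas $\Phi_2=\vee\circ\Delta-(\Delta\otimes id+id\otimes\Delta)\circ\vee$, so the asserted ``base case $=$ Definition \ref{coBVdef}'' does not hold.

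The repair is to replace $\vee\circ\Delta$ in your shuffle sum by the full binary cobracket $\Phi_2$ (equivalently, to prove the identity in the form (\ref{coBVidentity}) with the coefficient $(k-2)$, and then observe that expanding $\sum_{i<j}\Phi_2$ produces each $\Delta$-insertion $(k-1)$ times, converting $(k-2)$ into the coefficient $1$ appearing in $\Phi_k$). This correction genuinely matters for the theorem you are proving: it is a statement about the \emph{DG} gravity coalgebra on $V[1]$ itself, where the $\Delta$-insertion terms survive and must cancel with the right multiplicities for the generalized Jacobi identity (\ref{coLieinftyidentity}) to hold. (In the homology-level application of Section 6 these terms are killed by $\E\circ\Delta=0$ and $\Delta\circ\M=0$, which is why the coefficient is less visible there.) The remaining points of your outline---skew-symmetry of $s_k$ from cocommutativity and coassociativity of $\vee$ together with the shift signs, and compatibility of $\Delta$ with $s_k$ from $\Delta^2=0$---are fine and match the intended dualization.
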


The proof of the theorem is completely dual to that of Theorem \ref{BVtoLieinfty}.

\section{\bf The Batalin-Vilkovisky algebra}

\subsection{} 
In this section, we recall the proof of Theorem \ref{theoremBV} from \cite{Chen}. 

\begin{lemma} \label{dgalg}
The chain complex $(CC_*(C)[m], b)$ is a DG algebra
with product $\bullet$.
\end{lemma}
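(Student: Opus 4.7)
The plan is to establish two properties: associativity of $\bullet$ and the graded Leibniz rule for $b$. Associativity is immediate: both $(\alpha\bullet\beta)\bullet\gamma$ and $\alpha\bullet(\beta\bullet\gamma)$ yield the concatenation $(a_0 b_0 c_0)[a_1|\cdots|a_n|b_1|\cdots|b_p|c_1|\cdots|c_q]$ with the same Koszul sign, by associativity of the product on $C$.

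For the Leibniz rule $b(\alpha\bullet\beta) = (b\alpha)\bullet\beta + (-1)^{|\alpha|}\alpha\bullet(b\beta)$, I would expand both sides using (\ref{twisteddiff}) and the definition of $\bullet$. The resulting terms split into four groups: (a) $d$ applied to the coefficient $a_0 b_0$; (b) $d$ applied to each interior bar element; (c) middle coproduct splittings $a_i\mapsto a_i'|a_i''$ or $b_j\mapsto b_j'|b_j''$; and (d) the two boundary coproduct splittings (the last two summands in (\ref{twisteddiff})). Groups (a), (b), (c) match on both sides by direct comparison, using that $d$ is a derivation of the product on $C$.

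The essential content lies in group (d). On the left, $b(\alpha\bullet\beta)$ applies the coproduct to $a_0 b_0$, producing a front term $(a_0b_0)'[(a_0b_0)''|a_1|\cdots|b_r]$ and a back term $-(a_0b_0)'[a_1|\cdots|b_r|(a_0b_0)'']$. On the right, $(b\alpha)\bullet\beta$ and $\alpha\bullet(b\beta)$ contribute four terms involving $\Delta(a_0)$ and $\Delta(b_0)$ separately, with the new bar element inserted at position $1$, position $n+1$, or position $n+r+1$. These are matched as follows. The first form of (\ref{frob}), namely $(a_0 b_0)' \otimes (a_0 b_0)'' = (a_0 b_0') \otimes b_0''$, identifies the back term on the left with the back splitting of $b_0$ on the right. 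Combining (\ref{frob}) with the graded commutativity of $C$ yields the further identity $\Delta(a_0 b_0) = (\pm)(a_0'b_0)\otimes a_0''$, which identifies the front term on the left with the front splitting of $a_0$ on the right. The same combined identity forces the two remaining \emph{middle} terms on the right (the back splitting of $a_0$ and the front splitting of $b_0$, both inserting at position $n+1$) to cancel against each other.

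The main obstacle is the sign bookkeeping. Verifying that the Koszul signs from (\ref{twisteddiff}), the sign $(-1)^{|b_0|\cdot|[a_1|\cdots|a_n]|}$ in the definition of $\bullet$, and the factor $(-1)^{m|a_0'|}$ appearing in (\ref{frob}) all combine to produce exactly one Leibniz sign $(-1)^{|\alpha|}$ at each of the boundary positions requires careful degree tracking, especially since the product on $C$ has degree $-m$ and each bar element is desuspended. Once these signs are aligned, the Leibniz identity holds at the chain level, and the lemma follows.
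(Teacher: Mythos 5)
Your proposal is correct and follows the same route as the paper, which simply states that the lemma is proved "by direct verification" (citing Chen's Lemma 4.1): associativity is immediate from concatenation, and the Leibniz rule reduces to matching the boundary coproduct terms via the Frobenius relation (\ref{frob}) together with graded commutativity, exactly as you describe. Your identification of the front/back terms and the cancellation of the two middle insertions at position $n+1$ is the essential content of that verification.
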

\begin{proof}
The proof is by direct verification, see \cite[Lemma 4.1]{Chen}.
\end{proof}

The product $\bullet$ on $CC_*(C)[m]$ is not commutative, but homotopy
commutative:

\begin{lemma}\label{htpyofcomm}
Define a bilinear operator
$$*:CC_*(C)\otimes CC_*(C)\longrightarrow CC_*(C)$$
as follows: for $\a=a_0 [a_1|\cdots|a_n],\b=b_0 [b_1|\cdots|b_r]\in
CC_*(C)$,
\begin{equation}\label{starop}
\a*\b :=\sum_{i=1}^n
(-1)^{|b_0|+(|\b|-1)|[a_{i+1}|\cdots|a_n]|}
\varepsilon(a_i b_0)a_0
[a_1|\cdots|a_{i-1}|b_1|\cdots|b_r|a_{i+1}|\cdots|a_n].
\end{equation}
 Then
\begin{equation}\label{htpy}
b(\a*\b)=b\a*\b+(-1)^{|\a|+1}\a*b\b+(-1)^{|\a|}(\a\bullet\b-(-1)^{|\a||\b|}\b\bullet\a).\end{equation}
\end{lemma}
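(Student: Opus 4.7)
The plan is to verify (\ref{htpy}) by direct expansion, using the definitions (\ref{twisteddiff}) of $b$, (\ref{starop}) of $*$, and the open Frobenius relation (\ref{frob}). Each of $b(\a*\b)$, $b\a*\b$, and $\a*b\b$ decomposes by the type of contribution $b$ makes to a given entry: internal differential $d$, coproduct splitting $x\mapsto x'|x''$, or the cyclic wrap of the outer entry.

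I would first match and cancel the ``generic'' terms: summands in which $d$ acts, or a coproduct splits an entry $a_j$ with $j\ne 0,i$ or an entry $b_j$ with $j\ne 0$, appear identically on both sides of (\ref{htpy}) up to Koszul signs and so cancel. After this reduction, what remains concentrates around the insertion position and the cyclic wrap of the outer entry.

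Next I would treat the boundary contributions. In $b\a*\b$, a coproduct $a_i\mapsto a_i'|a_i''$ in $b\a$ followed by $*\b$ with counit pairing produces summands with $\eps(a_i' b_0)$ or $\eps(a_i'' b_0)$ and the other half of the coproduct sitting adjacent to the inserted block. In $\a*b\b$, the wrap of $b_0$ in $b\b$ (placing $b_0''$ at the beginning or end of the $\b$-body), paired by $*$ with some $a_i$, produces summands with $\eps(a_i b_0')$ and $b_0''$ inside the inserted block. Using (\ref{frob}) together with the counit axiom, each such expression collapses to one of the form $a_0[\ldots|a_i b_0|\ldots]$, and the two families cancel pairwise. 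This is the crucial step where the open Frobenius structure is used, and where the bulk of the sign-tracking lives.

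The only unmatched terms are the cyclic wraps of $a_0$ in $b\a$ with the $*$-pairing landing on the wrapped entry $a_0''$: the sum $\sum\eps(a_0'' b_0)\,a_0'$ reduces, by (\ref{frob}) and the counit, to $\pm a_0 b_0$, and hence yields precisely the two concatenations $\pm a_0 b_0[a_1|\cdots|a_n|b_1|\cdots|b_r]$ and $\pm a_0 b_0[b_1|\cdots|b_r|a_1|\cdots|a_n]$. Together with the graded commutativity of the product on $C$, these assemble into $(-1)^{|\a|}(\a\bullet\b-(-1)^{|\a||\b|}\b\bullet\a)$, as required. The main obstacle is consistent sign bookkeeping: the argument has no conceptual subtlety beyond (\ref{frob}), but requires patient tracking of Koszul signs and care in choosing the correct (left or right) form of the Frobenius relation at each step.
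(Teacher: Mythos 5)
Your proposal is correct and matches the paper's approach: the paper's proof of Lemma \ref{htpyofcomm} is exactly a direct term-by-term verification (deferred to \cite[Lemma 5.1]{Chen}), and your decomposition — cancellation of the generic differential/coproduct terms, pairwise cancellation via (\ref{frob}) of the $a_i$-coproduct terms in $b\a*\b$ against the $b_0$-wrap terms in $\a*b\b$, and identification of the $a_0$-wrap terms landing on $a_0''$ with $\pm\a\bullet\b$ and $\pm\b\bullet\a$ via $\sum\eps(a_0''b_0)\,a_0'=\pm a_0b_0$ — is precisely the structure of that verification. The only work you have left implicit is the Koszul sign bookkeeping, which is also all the paper leaves to the reader.
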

\begin{proof}
The proof is by direct verification, see \cite[Lemma 5.1]{Chen}.
\end{proof}

It follows from Lemma \ref{dgalg} and Lemma \ref{htpyofcomm}
that $(HH_*(C)[m],\bullet)$ is a graded commutative algebra.

\subsection{}
Define the binary operator
$$\{\,,\,\}: CC_*(C)\otimes CC_*(C)\longrightarrow CC_*(C)$$
to be the commutator of $*$ above, namely
$$\{\a,\b\}:=\a*\b-(-1)^{(|\a|+1)(|\b|+1)}\b*\a,$$
for $\a,\b\in CC_*(C)$.

\begin{lemma}
The chain complex $(CC_*(C)[m-1],b)$ is a 
 DG Lie algebra with the Lie bracket $\{\,,\,\}$.
\end{lemma}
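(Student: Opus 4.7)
The plan is to verify the three defining properties of a DG Lie algebra on $(CC_*(C)[m-1], b, \{\,,\,\})$: graded antisymmetry of the bracket, the graded Leibniz rule for $b$, and the graded Jacobi identity. Antisymmetry is immediate since $\{\a,\b\}$ is defined as a graded commutator of the binary operation $*$, so nothing needs to be checked there.

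For the Leibniz rule, one applies Lemma \ref{htpyofcomm} to each of $\a * \b$ and $\b * \a$ and then subtracts with the Koszul sign $(-1)^{(|\a|+1)(|\b|+1)}$ coming from the definition of $\{\a,\b\}$. Each application of the lemma produces, beyond the expected derivation terms, a defect of the form $(-1)^{|\a|}(\a\bullet\b - (-1)^{|\a||\b|}\b\bullet\a)$, respectively its counterpart with $\a$ and $\b$ swapped. A short Koszul-sign calculation shows that these two defect contributions cancel exactly in the commutator, leaving precisely the graded Leibniz identity for $b$ with respect to $\{\,,\,\}$ in the shifted grading on $CC_*(C)[m-1]$.

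For the Jacobi identity, I would show that $*$ is a graded right pre-Lie operation at the chain level, i.e.\ that the associator
$$A(\a,\b,\g) := (\a * \b) * \g - \a * (\b * \g)$$
is graded symmetric in $\b$ and $\g$ up to the appropriate Koszul sign. Unwinding the definition in \eqref{starop}, $\a * \b$ substitutes the string $[b_1|\cdots|b_r]$ of $\b$ for a single entry $a_i$ of $\a$, weighted by $\varepsilon(a_i b_0)$. Expanding $(\a * \b) * \g$ then splits into two kinds of terms: \emph{nested} ones, where the string from $\g$ lands inside an entry coming from $\b$, and \emph{parallel} ones, where the strings from $\b$ and $\g$ both land in original entries of $\a$. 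The nested terms reproduce precisely $\a * (\b * \g)$, so the associator isolates the parallel configurations, which are manifestly symmetric under $\b \leftrightarrow \g$ once one accounts for the relative order of the two insertion positions. By Gerstenhaber's classical argument, the graded commutator of any right pre-Lie operation satisfies the graded Jacobi identity, and this finishes the verification.

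The main obstacle is sign bookkeeping. Both the defect cancellation in the Leibniz check and the symmetry of the associator rely on correctly tracking several Koszul factors together with the degree constraint imposed by $\varepsilon(a_i b_0)$, which identifies $a_i$ with the Frobenius dual of $b_0$ and shifts the effective degrees inside the insertion. The cleanest route is to formalize $*$ as a cyclic insertion operation on tensor words and argue at that level, rather than expanding multi-indexed sums entry by entry.
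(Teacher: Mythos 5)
Your proposal is correct, and it is more of a proof than the paper itself offers: the paper disposes of this lemma by citing a direct verification in \cite{Chen} (Lemma 5.4 and Corollary 5.5 there), whereas you supply a structural argument. Your Leibniz step is exactly right: applying (\ref{htpy}) to $\a*\b$ and to $\b*\a$ and forming the commutator with the sign $(-1)^{(|\a|+1)(|\b|+1)}$, the two defect terms $(-1)^{|\a|}(\a\bullet\b-(-1)^{|\a||\b|}\b\bullet\a)$ and its swapped counterpart do cancel (the relevant sign identity is $(-1)^{(|\a|+1)(|\b|+1)+|\b|+|\a||\b|}=(-1)^{|\a|+1}$), leaving the graded derivation property of $b$ for $\{\,,\,\}$. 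Your Jacobi step via the right pre-Lie property of $*$ is the classical Gerstenhaber mechanism \cite{Ger63}, and it genuinely applies here: since $\a*\b$ deletes a single entry $a_i$ (consuming $b_0$ through $\varepsilon(a_ib_0)$) and splices in the tail of $\b$, the expansion of $(\a*\b)*\g$ splits cleanly into nested insertions, which reproduce $\a*(\b*\g)$ term by term including the $\varepsilon$ factors, and parallel insertions at distinct entries $a_i$, $a_j$, which are graded symmetric under $\b\leftrightarrow\g$; there is no overlapping case to worry about because insertion replaces exactly one entry. What your route buys is an explanation of \emph{why} the Jacobi identity holds, rather than a term-by-term cancellation; what the paper's route (deferring to the explicit verification) buys is that all Koszul signs, including the parity bookkeeping coming from the shift $[m-1]$ and from the degree constraint $|a_i|+|b_0|=m$ hidden in $\varepsilon(a_ib_0)$, are checked in one place. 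The only thing I would insist you write out is the sign in the pre-Lie symmetry $A(\a,\b,\g)=(-1)^{(|\b|+1)(|\g|+1)}A(\a,\g,\b)$, since the $+1$ shifts there are exactly what make the commutator bracket close up into a Lie bracket on $CC_*(C)[m-1]$ rather than on $CC_*(C)[m]$.
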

\begin{proof}
The proof is by direct verification, see \cite[Lemma 5.4]{Chen}
and \cite[Corollary 5.5]{Chen}.
\end{proof}

In particular $HH_{*}(C)[m-1]$ is a graded
Lie algebra. Moreover, $\bullet$ and $\{\,,\,\}$ are compatible in the
following sense, which makes $HH_{*}(C)[m]$ into a Gerstenhaber algebra \cite{Ger63}:

\begin{definition}[Gerstenhaber algebra]\label{defofgerst}
Let $V$ be a graded vector space. A \emph{Gerstenhaber
algebra} on $V$ is a triple $(V,\cdot,\{\,,\,\})$ such that

\vi $(V,\cdot)$ is a graded commutative algebra; 

\vii $(V,\{\,,\,\})$ is a graded Lie algebra
whose Lie bracket is of degree $1$; 

\viii for any $\a,\b,\g\in V$, one has:
\begin{equation}\label{gerstiii}
\{\a\bullet\b,\gamma\}=\a\bullet\{\b,\gamma\}+(-1)^{|\b|(|\gamma|+1)}
\{\a,\gamma\}\bullet\b.\end{equation}
\end{definition}

\begin{theorem}\label{thmofgerst}
The Hochschild homology $HH_{*}(C)[m]$
is a Gerstenhaber algebra, with product $\bullet$ and bracket $\{\,,\,\}$.
\end{theorem}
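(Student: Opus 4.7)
The plan is to establish only the Poisson compatibility (\ref{gerstiii}), since the other axioms of Definition \ref{defofgerst} are already in hand: the graded commutative algebra structure on $HH_{*}(C)[m]$ follows from Lemmas \ref{dgalg} and \ref{htpyofcomm}, while the graded Lie algebra structure on $HH_{*}(C)[m-1]$ is the lemma immediately preceding the theorem. The degree shifts line up with those of Definition \ref{defofgerst}.

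I would work at the chain level of $CC_{*}(C)$ with the binary operation $*$ of (\ref{starop}) rather than with $\{\,,\,\}$ directly, and reduce (\ref{gerstiii}) to two identities: one for $(\alpha\bullet\beta)*\gamma$ and one for $\gamma*(\alpha\bullet\beta)$. The first should hold strictly in $CC_{*}(C)$. Indeed, in $\alpha\bullet\beta = \pm\, a_0b_0\,[a_1|\cdots|a_n|b_1|\cdots|b_r]$ the tensor word has two disjoint blocks; an insertion index $i$ in (\ref{starop}) for $(\alpha\bullet\beta)*\gamma$ falls either in the $a$-block, producing up to signs $(\alpha*\gamma)\bullet\beta$, or in the $b$-block, producing $\alpha\bullet(\beta*\gamma)$. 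Since the counit $\varepsilon(e_i c_0)$ sees only a single entry $e_i$, no interaction between the two blocks occurs, and a Koszul sign count delivers the strict Leibniz identity for $*$ from the right.

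The nontrivial part is $\gamma*(\alpha\bullet\beta)$, where $*$ inserts the concatenated word of $\alpha\bullet\beta$ into $\gamma = c_0[c_1|\cdots|c_s]$ as a \emph{single} block at one position $i$ weighted by $\varepsilon(c_i\cdot a_0b_0)$; this is genuinely different from inserting $\alpha$ and $\beta$ separately. To bridge the gap I would construct an explicit chain homotopy $H(\alpha,\beta,\gamma)\in CC_{*}(C)$ built from ``double insertions'' of $\alpha$ and $\beta$ at two distinct positions $i<j$ of $\gamma$, weighted by $\varepsilon(c_i a_0)\,\varepsilon(c_j b_0)$, with the intermediate entries of $\gamma$ left untouched. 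Applying the Hochschild differential (\ref{differentialb}) to $H$, the coproducts on the intact $c_k$ and the internal boundary terms of $\gamma$ cancel in pairs; the collision terms ($j=i+1$) collapse, via the Frobenius relation (\ref{frob}), into the single-block insertion $\gamma*(\alpha\bullet\beta)$; and the terms in which an insertion reaches an end of $\gamma$ yield the separated expressions $(\gamma*\alpha)\bullet\beta$ and $\alpha\bullet(\gamma*\beta)$. The outcome is that $bH+Hb$ equals precisely the defect of the Leibniz identity for $*$ from the left.

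The main obstacle I expect is the sign-and-Frobenius bookkeeping in the collision terms: the coproduct in (\ref{differentialb}) lands on the product $a_0b_0$ and must be rewritten via (\ref{frob}) as $(a_0b_0)'\otimes(a_0b_0)'' = (a_0'\cdot b_0)\otimes a_0'' = \pm\, a_0'\otimes(a_0''\cdot b_0)$ to be recognized as the single-block insertion, and each of the four summands of $b$ must be tracked through the Koszul convention consistently. Once $H$ and $bH+Hb$ are in place, combining the two chain-level identities, antisymmetrizing via the commutator that defines $\{\,,\,\}$, and passing to $HH_{*}(C)$ gives (\ref{gerstiii}) and completes the Gerstenhaber algebra structure on $HH_{*}(C)[m]$.
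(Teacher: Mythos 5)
Your proposal is correct and follows essentially the same route as the paper: the strict right Leibniz identity for $(\alpha\bullet\beta)*\gamma$ and the left Leibniz identity up to a chain homotopy given by double insertions of $\alpha$ and $\beta$ at positions $i<j$ of $\gamma$ weighted by $\varepsilon(c_ia_0)\varepsilon(c_jb_0)$ are exactly parts \vi and \vii of Lemma~\ref{lemma45}, whose homotopy $h$ coincides with your $H$. The paper then deduces (\ref{gerstiii}) on homology just as you describe, deferring the sign verification to \cite[Lemma 5.8]{Chen}.
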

\begin{proof}
From above, $HH_{*}(C)[m]$ is both a
graded commutative algebra and a degree one graded Lie algebra.
Equation (\ref{gerstiii}) is immediate from the following
Lemma \ref{lemma45}.
\end{proof}

\begin{lemma}\label{lemma45}
For any $\a=a_0[a_1|...|a_n],
\b=b_0[b_1|...|b_r],
\g=c_0[c_1|...|c_l]\in CC_*(C)$, one has:

\vi  $(\a\bullet\b)*\gamma=\a\bullet(\b*\gamma)+(-1)^{|\b|(|\gamma|+1)}(\a*\gamma)
\bullet\b;$

\vii $\gamma*(\a\bullet\b)-(\gamma*\a)\bullet\b-(-1)^{|\a|(|\gamma|+1)}\a\bullet(\gamma*\b)
=(b\circ h-h\circ b)(\a\otimes\b\otimes\gamma)$, where
\begin{align*}
h(\a & \otimes  \b\otimes\gamma) \\ 
:= & \sum_{i<j}(-1)^{\epsilon}
\varepsilon(c_ia_0)\varepsilon(c_jb_0)
c_0 [c_1|\cdots|c_{i-1}|a_1|\cdots|a_n|c_{i+1}|\cdots
|c_{j-1}|b_1|\cdots|b_r|c_{j+1}|\cdots|c_l], \\
\epsilon = &(|\a|-1)|[c_{i+1}|\cdots|c_n]|+(|\b|-1)|[c_{j+1}|\cdots|c_n]|.
\end{align*}
\end{lemma}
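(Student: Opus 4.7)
The plan is to establish both (i) and (ii) by direct combinatorial expansion, matching terms according to insertion positions and tracking Koszul signs carefully.

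For (i), I would simply expand $(\alpha\bullet\beta)*\gamma$. Since $\alpha\bullet\beta$ has zeroth entry $a_0b_0$ and bar-string $[a_1|\cdots|a_n|b_1|\cdots|b_r]$, applying $*$ produces a sum over positions in this concatenated bar-string at which $[c_1|\cdots|c_l]$ is inserted, each term weighted by $\varepsilon(x\cdot c_0)$ where $x$ is the replaced entry. Splitting this sum according to whether the replaced entry is an $a_i$ or a $b_j$ yields the two terms of the RHS: insertions at $a$-slots reassemble (after commuting $\beta$ past $\gamma$, which produces the sign $(-1)^{|\beta|(|\gamma|+1)}$) into $(\alpha*\gamma)\bullet\beta$, and insertions at $b$-slots produce $\alpha\bullet(\beta*\gamma)$. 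This is routine bookkeeping.

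For (ii), the asymmetry is that $\gamma*(\alpha\bullet\beta)$ inserts the entire word $[a_1|\cdots|a_n|b_1|\cdots|b_r]$ at a \emph{single} slot of $\gamma$, while the two subtracted terms put $\beta$ entirely after $\gamma$ or $\alpha$ entirely before $\gamma$. The homotopy $h$ bridges these configurations by inserting $[a_1|\cdots|a_n]$ at one slot $i$ of $\gamma$ and $[b_1|\cdots|b_r]$ at a distinct slot $j>i$. One then expands $bh(\alpha\otimes\beta\otimes\gamma)$ and $hb(\alpha\otimes\beta\otimes\gamma)$ term by term. Contributions from the internal differential $d$, and from coproducts on bar-string entries not adjacent to either insertion site, appear with opposite signs in the two expansions and cancel in pairs. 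The non-cancelling contributions in $bh$ arise from three sources: (a) coproducts on the $c$-entries flanking an insertion, which via the Frobenius identity (\ref{frob}) recombine with the counit factor $\varepsilon(c_i\cdot a_0)$ or $\varepsilon(c_j\cdot b_0)$ to reproduce the standard single-insertion pattern of $*$, producing the terms $-(\gamma*\alpha)\bullet\beta$ and $-(-1)^{|\alpha|(|\gamma|+1)}\alpha\bullet(\gamma*\beta)$; (b) the collision case $j=i+1$ combined with the adjacent coproducts and Frobenius reassembly, which yields the interior part of $\gamma*(\alpha\bullet\beta)$; and (c) the cyclic-permutation terms in the last line of $b$ applied to $h$, which supply the remaining boundary insertions (at positions $k=1$ and $k=l$) of $\gamma*(\alpha\bullet\beta)$.

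The main obstacle is Koszul sign bookkeeping. Each contribution carries an intricate sign depending on the degrees of the $a_i$, $b_j$, $c_k$ and on how many entries are moved past one another or past an absorbed counit factor. Verifying that the three non-cancelling contributions reassemble into precisely the signs of the LHS of (ii) requires several parallel case analyses, and the Frobenius identity (\ref{frob}) must be invoked in the appropriate twisted form at each flanking coproduct. The algebraic manipulations themselves are mechanical, but the sign tracking is delicate and constitutes the real content of the verification.
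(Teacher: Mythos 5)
The paper records no argument for this lemma beyond ``direct verification, see \cite[Lemma 5.8]{Chen}'', so your strategy of expanding both sides and matching terms is exactly the intended one, and your account of part \vi is correct. For part \vii, however, your roadmap misidentifies where the three surviving terms come from, and since locating them is the only non-mechanical content of the verification, you would be led astray if you followed it. The terms $-(\g*\a)\bullet\b$ and $-(-1)^{|\a|(|\g|+1)}\a\bullet(\g*\b)$ cannot arise from coproducts of $c$-entries flanking an insertion site: once $h$ has been applied, $c_i$ and $c_j$ survive only as the scalars $\varepsilon(c_ia_0)$, $\varepsilon(c_jb_0)$, so a coproduct on $c_{i\pm1}$ has nothing to ``recombine'' with; moreover those two target terms carry the products $c_0b_0$ (resp.\ $a_0c_0$) in the \emph{zeroth} slot, and the only mechanism that produces a product there is the Frobenius identity (\ref{frob}) applied to the coproduct of the zeroth entry $c_0$. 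Concretely, they arise inside $h\circ b$ from the last line of (\ref{differentialb}) acting on $\g$: in $h\bigl(\a\ot\b\ot c_0'[c_1|\cdots|c_l|c_0'']\bigr)$ the $\b$-insertion can land on the new terminal slot $c_0''$, and $\varepsilon(c_0''b_0)\,c_0'=\pm\,c_0b_0$ yields $\pm(\g*\a)\bullet\b$ (symmetrically for $\a$ and $c_0'[c_0''|c_1|\cdots|c_l]$).

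Likewise, \emph{all} of $\g*(\a\bullet\b)$ --- not merely an ``interior part'' --- comes from your collision configuration: $b\g$ splits $c_k$ into $c_k'|c_k''$, $h$ inserts the $a$-string at $c_k'$ and the $b$-string at the adjacent slot $c_k''$, and $\varepsilon(c_k'a_0)\varepsilon(c_k''b_0)=\pm\varepsilon(c_ka_0b_0)$ reproduces the coefficient of the $k$-th term of $\g*(\a\bullet\b)$ for every $k=1,\dots,l$. There is no residual boundary contribution supplied by the cyclic terms of $b$ applied to $h(\a\ot\b\ot\g)$; those terms cancel against the $c_0$-splitting terms of $h\circ b$ in which neither insertion lands on $c_0''$, just as the flanking-coproduct terms you single out cancel in pairs between $b\circ h$ and $h\circ b$ (and the terms in which an insertion absorbs one half of a split $c_k$, producing $c_ka_0$ or $c_kb_0$ as a bar entry, cancel against $h$ applied to the $a_0'[a_0''|\cdots]$-type terms of $b\a$ and $b\b$). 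With the bookkeeping reorganized along these lines the direct verification does go through.
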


\begin{proof}
The proof is by direct verification, see \cite[Lemma 5.8]{Chen}.
\end{proof}

\subsection{}
Theorem \ref{theoremBV} follows from \cite[Proposition 1.2]{Ge94},
Theorem \ref{thmofgerst}, and the following:

\begin{lemma}\label{bvgersten}
For any $\a,\b\in HH_*(C)[m]$, one has
\begin{equation}
\{\a,\b\}=(-1)^{|\a|}B(\a\bullet\b)-(-1)^{|\a|}B(\a)\bullet
b-\a\bullet B(\b).
\end{equation}
More precisely, for
$\a=x [a_1|\cdots|a_n],\b=y [b_1|\cdots|b_r]\in
CC_*(C)$, define
$$
\phi(\a,\b):=\sum_{i<j}\pm\varepsilon(x)\varepsilon(a_jy)
a_i [a_{i+1}|\cdots|a_{j-1}|b_1|\cdots|b_r|a_{j+1}|\cdots|a_n|a_1|\cdots|a_{i-1}]
$$
and
$$\psi(\a,\b):=\sum_{k<l}\pm\varepsilon(y)\varepsilon(b_lx) b_k 
[b_{k+1}|\cdots|b_{l-1}|a_1|\cdots|a_n|b_{l+1}|\cdots|b_r|b_1|\cdots|b_{k-1}],
$$
and let $h:=\phi+\psi$. Then
\begin{equation}\label{bvhtpy}
(b\circ h+h\circ b)(\a\otimes
\b)=\{\a,\b\}-(-1)^{|\a|}B(\a\bullet\b)-(-1)^{(|\b|+1)(|a|+1)}\b\bullet
B(\a) +\a\bullet B(\b).
\end{equation}
\end{lemma}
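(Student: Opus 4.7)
The plan is to prove the chain-level identity (\ref{bvhtpy}) by direct computation. The stated homology identity for $\{\alpha,\beta\}$ then follows: passing (\ref{bvhtpy}) to $HH_*(C)$ kills the left-hand side, and graded commutativity of $\bullet$ (Theorem \ref{thmofgerst}) converts $(-1)^{(|\beta|+1)(|\alpha|+1)}\beta\bullet B(\alpha)$ into $-(-1)^{|\alpha|}B(\alpha)\bullet\beta$ modulo boundaries.

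First I would expand each of the four summands on the right-hand side of (\ref{bvhtpy}) explicitly as a sum of cocyclic bar tensors. Using (\ref{starop}), the bracket $\{\alpha,\beta\}$ becomes a sum over positions $i$ in $\alpha$ of counit-contracted insertions of $\beta$ into $\alpha$, together with the analogous insertions for $\beta*\alpha$. The three Connes-type terms $B(\alpha\bullet\beta)$, $\beta\bullet B(\alpha)$, $\alpha\bullet B(\beta)$ are sums of cyclic rotations of the concatenated word $a_0b_0[a_1|\cdots|a_n|b_1|\cdots|b_r]$, with $\varepsilon$ pulled out of the leading position.

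Next I would compute $b\circ h$ and $h\circ b$ term by term, using the four pieces of $b$ visible in (\ref{differentialb}): (1) $d$ on the leading coefficient, (2) $d$ on each bar entry, (3) the bar-coproduct $a_i\mapsto a_i'\otimes a_i''$ splitting an entry into two adjacent entries, and (4) the cyclic term $a_0\mapsto a_0'\otimes a_0''$ that promotes one piece to the front and appends the other at the tail. For each summand of $\phi$ (indexed by pairs $i<j$ and weighted by counit evaluations of the leading coefficients and adjacent entries) and of $\psi$, the (1)--(2) contributions cancel against the corresponding pieces of $h\circ b$ by Leibniz, and the (3) contributions cancel telescopically on \emph{interior} bar entries, leaving only boundary contributions at positions immediately adjacent to $i$ and $j$.

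The main obstacle is the combinatorial bookkeeping at these boundary positions, which is the heart of the proof. The surviving contributions split into three types: (a) boundary (3)-terms that, via the Frobenius identity (\ref{frob}), rewrite $(a_ib_0)'\otimes(a_ib_0)''$ in a form that collapses with a neighboring counit to produce the insertions defining $\alpha*\beta$ and $\beta*\alpha$; (b) (4)-terms that, when combined with a counit, effect a cyclic rotation of the full concatenated word and assemble into the three $B$-type expressions; and (c) extreme-index terms ($j=i+1$, $i=0$, $j=n$, and their analogues for $\psi$) that contribute additional boundary rotations. Verifying that these groups match the right-hand side of (\ref{bvhtpy}) with the precise Koszul signs is a mechanical but lengthy exercise; the key structural input throughout is (\ref{frob}), which is precisely what relates the two ways of splitting products of the form $a_ib_0$ or $a_0b_0$ via the coproduct that arise from the $\bullet$-product inside $B(\alpha\bullet\beta)$ and from step (3) applied to $\phi,\psi$.
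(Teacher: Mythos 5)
Your proposal is correct and takes essentially the same approach as the paper: the paper's own proof of this lemma is simply a direct verification deferred to \cite[Lemma 7.3]{Chen}, and your outline (Leibniz cancellation of the differential terms, telescoping of interior coproduct terms, the Frobenius identity (\ref{frob}) governing the boundary contributions that assemble into $\a*\b$, $\b*\a$ and the three $B$-expressions, followed by graded commutativity of $\bullet$ on homology to recover the first displayed identity) is a faithful sketch of that computation. Like the paper, you do not exhibit the full sign bookkeeping, but the structural ingredients you identify are exactly the right ones.
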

\begin{proof}
The proof is by a direct verification, see \cite[Lemma 7.3]{Chen}.
\end{proof}

\section{\bf The gravity algebra}

\subsection{}
We define the complex $(CC_*(C)[u,u^{-1}], b+u^{-1}B)$ 
by 
$$ (b+u^{-1}B)(\a\ot u^n)= b\a\ot u^n + B\a \ot u^{n-1},
\quad\mbox{ for all } n. $$
The complex $(CC_*(C)[u], b+u^{-1}B)$ in 
Definition \ref{cycliccomplex} is the quotient 
of $(CC_*(C)[u,u^{-1}], b+u^{-1}B)$ 
by its subcomplex $CC_*(C)[u^{-1}]u^{-1}$.
The short exact sequence
$$ \xymatrix{
0 \ar[r] & CC_*(C) \ar[r] & 
CC_*(C)[u] \ar[r]^{u^{-1}} & CC_*(C)[u] \ar[r] & 0
}
$$
induces the long exact sequence (\ref{Connes_exact_seq}).
By diagram chasing, one can see that 
$$\M\circ \E=B:HH_*(C)\la HH_{*+1}(C).$$

\subsection{}
Corollary \ref{corollarygravity} is immediate from
Theorem \ref{theoremBV} and the following general result
(see \cite[Theorem 8.5]{Chen}):

\begin{proposition} \label{gravityalg}
Let $(V,\bullet,\Delta)$ be a Batalin-Vilkovisky algebra, and 
$W$ be a graded vector space.
Let $\E: V_*\to W_*$ and $\M: W_*\to V_{*+1}$
be two maps such that $\E\circ\M=0$ and $\M\circ\E=\Delta$. Then 
$(W[-2],\{c_n\})$ is a gravity algebra, where 
$$c_n(\a_1\otimes\cdots\otimes \a_n):= (-1)^{(n-1)|\a_1|+(n-2)|\a_2|+\cdots+|\a_{n-1}|}
\E(\M(\a_1)\bullet\cdots\bullet \M(\a_n)).$$
\end{proposition}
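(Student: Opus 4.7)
My plan is to verify the gravity axiom (\ref{def_of_brackets}) directly for the operators $c_n$, reducing it to a single identity in $V$ that follows from the second-order axiom (\ref{secondorder}) in the definition of a Batalin-Vilkovisky algebra. The starting observation is immediate: $\Delta \circ \M = (\M \circ \E) \circ \M = \M \circ (\E \circ \M) = 0$, so every element of the form $\M\a$ (for $\a \in W$) is $\Delta$-closed.

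The technical core is the following identity $(\ast)$, which I would prove by induction on $k$: for any $x_1, \ldots, x_k \in V$ with $\Delta x_i = 0$ for all $i$,
\[
\Delta(x_1 \cdots x_k) = \sum_{1 \le i < j \le k} (-1)^{\sigma_{ij}} \Delta(x_i x_j) \cdot x_1 \cdots \hat x_i \cdots \hat x_j \cdots x_k, \qquad (\ast)
\]
where $\sigma_{ij}$ is the Koszul sign incurred by pulling $x_i$ and $x_j$ to the front. The base case $k = 3$ is (\ref{secondorder}) after dropping its $\Delta(x_i)$ terms. For the inductive step, I apply (\ref{secondorder}) to the three-fold factorization $x_1 \cdot x_2 \cdot (x_3 \cdots x_k)$ and then expand $\Delta(x_2 \cdots x_k)$, $\Delta(x_1 x_3 \cdots x_k)$, and $\Delta(x_3 \cdots x_k)$ via the inductive hypothesis. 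A short computation shows that each pair $(p, q)$ with $p, q \ge 3$ receives three contributions whose signs combine to a single $(-1)^{\sigma_{pq}}$, while the pairs $(1, q)$, $(2, q)$, and $(1, 2)$ each appear only once.

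To conclude, I apply $(\ast)$ with $x_i = \M\a_i$, multiply on the right by $\M\b_1 \bullet \cdots \bullet \M\b_l$, and apply $\E$. Using $\M \circ \E = \Delta$, one rewrites $\Delta(\M\a_i \bullet \M\a_j) = \M\E(\M\a_i \bullet \M\a_j)$, which is $\pm \M c_2(\a_i, \a_j)$, and similarly $\Delta(\M\a_1 \bullet \cdots \bullet \M\a_k) = \pm \M c_k(\a_1, \ldots, \a_k)$, where the $\pm$ signs are exactly those built into the definition of $c_n$. For $l > 0$, the left side of $(\ast)$ then becomes $\pm c_{l+1}(c_k(\a_1, \ldots, \a_k), \b_1, \ldots, \b_l)$---the right-hand side of the gravity axiom---while the right side of $(\ast)$ becomes $\sum_{i<j} \pm c_{k-1+l}(c_2(\a_i, \a_j), \a_1, \ldots, \hat\a_i, \ldots, \hat\a_j, \ldots, \a_k, \b_1, \ldots, \b_l)$---the left-hand side of the gravity axiom. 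For $l = 0$, applying $\E$ directly to the left side of $(\ast)$ gives $\E \circ \Delta = \E \circ \M \circ \E = 0$, which is the required vanishing on the right-hand side of the gravity axiom.

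The main obstacle is sign bookkeeping: the $(-1)^{\sigma_{ij}}$ in $(\ast)$, combined with the factors $(-1)^{(n-1)|\a_1|+\cdots+|\a_{n-1}|}$ in the definition of each $c_n$ and the degree shift $W[-2]$, must match precisely the $(-1)^{\epsilon(i,j)}$ signs appearing in the gravity axiom (\ref{def_of_brackets}). This is a mechanical exercise in the Koszul sign rule.
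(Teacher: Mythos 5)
Your proposal is correct and follows essentially the same route as the paper: an induction from the second-order identity (\ref{secondorder}) yielding a formula for $\Delta(x_1\bullet\cdots\bullet x_k)$ as a sum of $\Delta(x_i\bullet x_j)$-terms, followed by setting $x_i=\M(\a_i)$, multiplying by $\M(\b_1)\bullet\cdots\bullet\M(\b_l)$, and applying $\E$ together with $\E\circ\M=0$ and $\M\circ\E=\Delta$. The only (immaterial) difference is that you impose $\Delta x_i=0$ from the start to drop the $\sum_i x_1\cdots(\Delta x_i)\cdots x_k$ terms, whereas the paper keeps them in its identity (\ref{deriv}) and kills them afterwards via $\Delta\circ\M=0$.
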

\begin{proof}
It follows from (\ref{secondorder}), by induction on $n$,  that
 \begin{eqnarray}
 \Delta(x_1\bullet
x_2\bullet\cdots\bullet x_n)&=&\sum_{i<j}\pm \Delta(x_i\bullet x_j)\bullet
x_1\bullet\cdots\bullet\widehat{x_i}\bullet\cdot \bullet\widehat{x_j}
\bullet\cdots\bullet x_n\label{deriv}\\
&& + (n-2) \sum_i \pm x_1\bullet\cdots\bullet \Delta x_i\bullet\cdots\bullet x_n .   \nonumber
\end{eqnarray}
Now let $x_i=\M(\a_i)$, and apply $\E$ to both sides of the above
equality; we obtain:
\begin{eqnarray*}
&& \E\circ \Delta(\M(\a_1)\bullet
\M(\a_2)\bullet\cdots\bullet \M(\a_n))\\
&=&\sum_{i<j}\pm \E\circ\Big( \Delta(\M(\a_i)\bullet \M(\a_j))\bullet
\M(\a_1)\bullet\cdots\bullet\widehat{\M(\a_i)}\bullet\cdots\bullet\widehat{\M(\a_j)}
\bullet\cdots\bullet \M(\a_n)\Big)\\
&&+ (n-2)\sum_i \pm \E( \M(\a_1)\bullet\cdots\bullet \Delta\circ
\M(\a_i)\bullet\cdots\bullet \M(\a_n)) .
\end{eqnarray*} 
Since $\E\circ\Delta=\E\circ \M\circ \E=0$ and  $\Delta\circ \M=\M\circ \E\circ \M=0$, we have 
$$\sum_{1\le i< j\le n}\pm
c_{n-1} (c_2(\a_i\ot\a_j)\ot\a_1\ot\cdots\ot\widehat{\a_i}\ot\cdots\ot\widehat{\a_j}\ot
\cdots\ot\a_n)=0.$$ 

Similarly, by multiplying $y_1\bullet\cdots\bullet y_l$ on
both sides of (\ref{deriv}),  letting $y_j =\M(\b_j)$, and then applying $\E$ on
both sides, we obtain
\begin{eqnarray*}
&&\sum_{1\le i< j\le n}\pm c_{n+l-1}(
c_2(\a_i\ot \a_j)\ot\a_1\ot\cdots\ot\widehat{\a_i}\ot\cdots\ot\widehat{\a_j}\ot
\cdots\ot \a_n\ot\b_1\ot\cdots\ot\b_l) \\
&=&c_{l+1}(c_n(\a_1\ot\cdots\ot\a_n)\ot\b_1\ot\cdots\ot\b_l),
\end{eqnarray*}
for $l>0$. This proves the proposition.
\end{proof}

\section{\bf The Batalin-Vilkovisky coalgebra}

\subsection{} The proof of Theorem \ref{theoremcoBV} is similar to the proof
of Theorem \ref{theoremBV}. We begin with the following lemma.

\begin{lemma} \label{coalgebra}
The chain complex $(\cc_*(C)[1-m],b)$ is a DG coalgebra
with coproduct $\vee$.
\end{lemma}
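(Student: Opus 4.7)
The plan is to verify the lemma by direct computation: well-definedness on $\cc_*(C)$ is immediate (the range $2\le i\le n-1$ forces $n\ge 3$ and leaves at least one bar entry in each tensor factor, so $\vee$ descends from $CC_*(C)$ to the quotient and lands in $\cc_*(C)\otimes \cc_*(C)$), so the substance is in the chain-map property and coassociativity.

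For the chain map property $b\circ\vee=\pm(b\otimes\mathrm{id}+\mathrm{id}\otimes b)\circ\vee$, I would expand both sides on a generic $a_0[a_1|\cdots|a_n]$ and match according to the four summands of $b$ in \eqref{differentialb}: the internal $d$ on $a_0$, the internal $d$ on each $a_j$, the internal coproduct $\Delta(a_j)$, and the cyclic Frobenius term $a_0'([a_0''|a_1|\cdots|a_n]-\pm[a_1|\cdots|a_n|a_0''])$. Matching is immediate whenever $d$ or $\Delta$ acts away from the splitting index $a_i$; the non-trivial cancellations arise in two cases. First, when $d$ or $\Delta$ hits $a_i$ itself, $\vee\circ b$ produces $\Delta(a_0\cdot da_i)$ or $\Delta(a_0\cdot a_{i,(1)}\cdot a_{i,(2)})$, and these recombine with the ``missing pieces'' of $(b\otimes\mathrm{id}+\mathrm{id}\otimes b)\circ\vee$ by Leibniz, the Frobenius identity \eqref{frob}, and coassociativity of $\Delta$. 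Second, the cyclic term transports $a_0''$ to either end of the bar sequence, so after $\vee$ splits, $a_0''$ lies in either the left or the right tensor factor; summing the two contributions recovers, again by coassociativity of $\Delta$ on $a_0$ and \eqref{frob}, the cyclic term of $b$ acting on each tensor factor.

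For coassociativity, I first use \eqref{frob} to rewrite $\vee(a_0[\cdots]) = \sum_i \pm (a_0 a_{i,(1)})[\cdots]\otimes a_{i,(2)}[\cdots]$. Both $(\vee\otimes\mathrm{id})\vee$ and $(\mathrm{id}\otimes\vee)\vee$ then become sums over pairs $2\le i_1<i_2\le n-1$ with $i_2-i_1\ge 2$, producing the same triple bar-arrangement $[a_1|\cdots|a_{i_1-1}]\otimes[a_{i_1+1}|\cdots|a_{i_2-1}]\otimes[a_{i_2+1}|\cdots|a_n]$ with respective coefficients
\[
(a_0 a_{i_2,(1)} a_{i_1,(1)})\otimes a_{i_1,(2)}\otimes a_{i_2,(2)}\quad\text{and}\quad (a_0 a_{i_1,(1)})\otimes (a_{i_1,(2)} a_{i_2,(1)})\otimes a_{i_2,(2)}.
\]
Their equality (up to Koszul signs) follows from the identity $(c\,y_{(1)})\otimes y_{(2)} = y_{(1)}\otimes(c\,y_{(2)})$, valid in any commutative open Frobenius algebra: applying \eqref{frob} to $cy=yc$ gives $(cy_{(1)})\otimes y_{(2)} = y_{(1)}\otimes (y_{(2)}c)$, and commutativity on the second factor yields the claim. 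Taking $y=a_{i_1}$, $c=a_{i_2,(1)}$, then multiplying by $a_0$ and tensoring with $a_{i_2,(2)}$, identifies the two coefficients.

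The main obstacle is the sign bookkeeping in the chain-map check, where $\epsilon(i)$, the positions $i$ relative to the indices hit by $b$, and the Koszul signs from bar-entry shifts must all be tracked simultaneously. Once the signs are settled, the remaining algebraic identifications are routine consequences of Leibniz, \eqref{frob}, and coassociativity, following the template of \cite[Lemma 4.1]{Chen} for the dual product $\bullet$.
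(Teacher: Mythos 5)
Your proposal is correct and follows essentially the same route as the paper's proof: a direct term-by-term expansion of $b\circ\vee$ and $\vee\circ b$, with the only non-immediate cancellations (the coproduct landing on the splitting entry, and the cyclic term transporting $a_0''$) resolved by the Frobenius identity \eqref{frob}. The one place you go beyond the paper is coassociativity, which the paper dismisses as clear but which you correctly reduce, via the identity $(c\,y_{(1)})\otimes y_{(2)}=\pm\, y_{(1)}\otimes(c\,y_{(2)})$ coming from \eqref{frob} and commutativity, to a comparison of the two iterated coefficients; this is a worthwhile addition, since coassociativity of $\vee$ genuinely uses the open Frobenius structure and not just coassociativity of $\Delta$.
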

\begin{proof}
It is clear that $\vee$ is coassociative. Therefore we only
need to check that $b$ is a derivation with respect to $\vee$.
Observe that the expressions $b\circ\vee(\a)$ and $\vee\circ b(\a)$ have two parts,
one contains those terms involving the differentials of the entries in $\a$ (which we call the {\it differential part}), the other contains those
terms involving the coproducts of the entries in $\a$ (which we call the {\it diagonal part}). It follows directly from the definition of $\vee$
that the differential parts of $b\circ\vee(\a)$ and $\vee\circ b(\a)$ are equal. For the diagonal parts, omitting the signs determined by Koszul sign rule from
our notation, we have
\begin{eqnarray}
&& b\circ\vee(a_0[a_1|...|a_n]) \label{dgc1} \\
&=& b\Big(\sum_{1<i<n} (a_0a_i)'[a_1|...|a_{i-1}]\Big)\ot
  (a_0a_i)''[a_{i+1}|...|a_n] \label{dgc2}\\
&\pm& \sum_{1<i<n} (a_0a_i)'[a_1|...|a_{i-1}]\ot
  b\Big( (a_0a_i)''[a_{i+1}|...|a_n] \Big) \label{dgc3}\\
&=&\sum_{1\le j<i<n}\pm (a_0a_i)'[a_1|...|a'_j|a''_j|...|a_{i-1}]
           \ot (a_0a_i)''[a_{i+1}|...|a_n] \label{dgc8}\\
&+&\sum_{1<i<n}\pm ((a_0a_i)')'[((a_0a_i)')''|a_1|...|a_{i-1}]
           \ot (a_0a_i)''[a_{i+1}|...|a_n] \label{dgc9}\\
&-&\sum_{1<i<n} \pm ((a_0a_i)')'[a_1|...|a_{i-1}|((a_0a_i)')'']
           \ot (a_0a_i)''[a_{i+1}|...|a_n] \label{dgc10}\\
&+&\sum_{1<i<j\le n}\pm (a_0a_i)'[a_1|...|a_{i-1}]
           \ot (a_0a_i)''[a_{i+1}|...|a'_j|a''_j|...|a_n] \label{dgc11}\\
&+&\sum_{1<i<n}\pm (a_0a_i)'[a_1|...|a_{i-1}]
           \ot ((a_0a_i)'')'[((a_0a_i)'')''|a_{i+1}|...|a_n] \label{dgc12}\\      
&-&\sum_{1<i<n} \pm(a_0a_i)'[a_1|...|a_{i-1}]
           \ot ((a_0a_i)'')'[a_{i+1}|...|a_n|((a_0a_i)'')''] , \label{dgc13}
\end{eqnarray}
while
\begin{eqnarray}
&& \vee\circ b(a_0[a_1|...|a_n]) \label{dgc21} \\
&=&\sum_{1\le j<i<n}\pm (a_0a_i)'[a_1|...|a'_j|a''_j|...|a_{i-1}]\ot
  (a_0a_i)''[a_{i+1}|...|a_n] \label{dgc26}\\
&+&\sum_{1<i<j\le n}\pm (a_0a_i)'[a_1|...|a_{i-1}]
           \ot (a_0a_i)''[a_{i+1}|...|a'_j|a''_j|...|a_n] \label{dgc27}\\
&+&\sum_{1<i<n}\pm (a_0a'_i)'[a_1|...|a_{i-1}]
           \ot (a_0a'_i)''[a''_i|a_{i+1}|...|a_n] \label{dgc28}\\
&+&\sum\pm (a_0a'_n)'[a_1|...|a_{n-1}]\ot (a_0a'_n)''[a''_n] 
{\phantom{\sum_i}} \label{dgc29}\\
&+&\sum \pm (a_0a''_1)'[a'_1]\ot (a_0a''_1)''[a_2|...|a_n]
{\phantom{\sum_i}} \label{dgc30}\\
&+& \sum_{1<i<n}\pm (a_0a''_i)'[a_1|...|a_{i-1}|a'_i]
           \ot (a_0a''_i)''[a_{i+1}|...|a_n] \label{dgc31} \\                     
&+&\sum\pm (a'_0a_1)'[a''_0]\ot (a'_0a_1)''[a_2|...|a_n] 
{\phantom{\sum_i}} \label{dgc32}\\
&+& \sum_{1<i<n}\pm (a'_0a_i)'[a''_0|a_1|...|a_{i-1}]
           \ot (a_0a_i)''[a_{i+1}|...|a_n] \label{dgc33}\\
&-&\sum_{1<i<n}\pm (a'_0a_i)'[a_1|...|a_{i-1}]
           \ot (a_0a_i)''[a_{i+1}|...|a_n|a''_0] \label{dgc34}\\
&-&\sum\pm (a'_0a_n)'[a_1|...|a_{n-1}]\ot (a'_0a_n)''[a''_0] {\phantom{\sum_i}} . \label{dgc35}
\end{eqnarray}
Keeping (\ref{frob}) in mind, we see that
(\ref{dgc8}) and (\ref{dgc26}) cancel, so do
(\ref{dgc9}) and (\ref{dgc33}),
(\ref{dgc10}) and (\ref{dgc31}),
(\ref{dgc11}) and (\ref{dgc27}),
(\ref{dgc12}) and (\ref{dgc28}),
(\ref{dgc13}) and (\ref{dgc34}). Hence, (\ref{dgc1}) = (\ref{dgc21}).
\end{proof}

\subsection{}
Define the permutations $\tau$ and $\sigma$ by
\begin{align*}
 \tau: \cc_*(C)\ot \cc_*(C) & \to \cc_*(C)\ot \cc_*(C)\\
  \a_1\ot\a_2 & \mapsto \pm\a_2\ot\a_1
\end{align*}
and
\begin{align*}
 \sigma: {\cc}_*(C) \otimes {\cc}_*(C)\otimes {\cc}_*(C) & \to
 {\cc}_*(C)\otimes {\cc}_*(C)\otimes {\cc}_*(C) \\
 \a_1\otimes\a_2\otimes\a_3 & \mapsto \pm \a_2\otimes\a_3\otimes\a_1.
\end{align*}
The following lemma says that $\vee$ is
cocommutative up to homotopy, 
and so $(\hh_*(C)[1-m], \vee)$ is a graded
cocommutative, coassociative coalgebra.

\begin{lemma} \label{cocommute}
Let $h:{\cc}_*(C)\longrightarrow {\cc}_*(C)\otimes{\cc}_*(C)$ be defined
by $$h(\a):=\sum_{i<j} \pm
a_0 [a_1|\cdots|a_{i-1}|a_{j+1}|\cdots|a_n] \otimes
 a_ia_j [a_{i+1}|\cdots|a_{j-1}],$$
for any $\a=a_0 [a_1|\cdots|a_n]\in {\cc}_*(C)$.
Then
\begin{equation}\label{cohtpy}
b\circ h(\a)+h\circ b(\a)=\tau\circ\vee(\a)-\vee(\a).
\end{equation}
\end{lemma}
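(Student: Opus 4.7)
The strategy is to mimic the proof of Lemma \ref{coalgebra}: expand both sides explicitly and match terms, with the Frobenius relation \eqref{frob} doing the crucial combinatorial work.

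First I would expand $b\circ h(\a)+h\circ b(\a)$ and organize the resulting terms into three groups according to which piece of $b$ is responsible: \emph{(i)} the \emph{differential part}, in which some $da_k$ appears; \emph{(ii)} the \emph{diagonal part}, in which some $a_k$ (with $1\le k\le n$) is split via the coproduct $a_k\mapsto a_k'\ot a_k''$; and \emph{(iii)} the \emph{outer part}, coming from the last line of \eqref{differentialb}, which splits $a_0$ and either prepends $a_0''$ to the sequence or appends it at the end. The right-hand side $\tau\circ\vee(\a)-\vee(\a)$ is of type (iii): it is purely of outer type. So the differential and diagonal parts of $b\circ h+h\circ b$ must cancel among themselves.

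For the differential part this is immediate: applying $d$ commutes past the tensor factors of $h$ and past the product $a_ia_j$ in the second factor, so the terms $da_k$ arising from $b\circ h(\a)$ and from $h\circ b(\a)$ are in evident bijection and cancel with the signs prescribed by Koszul. For the diagonal part I would compare, for each pair $i<j$ and each index $k$, the two ways one can see a ``split'' entry appear. In $b\circ h(\a)$ the diagonal part either splits some $a_k$ inside one of the bracketed words, or splits the product $a_ia_j$ via $(a_ia_j)'\ot(a_ia_j)''$ in the second factor. The latter terms are precisely where the Frobenius identity \eqref{frob} enters: using
\[
(a_ia_j)'\ot(a_ia_j)'' \;=\; (a_ia_j')\ot a_j'' \;=\; \pm\,a_i'\ot(a_i''a_j),
\]
these terms coincide (up to signs) with the terms of $h\circ b(\a)$ in which $b$ first splits $a_i$ or $a_j$ and then $h$ pairs the resulting half with the other original factor. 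A parallel argument handles the diagonal splitting of a $a_k$ with $k\notin\{i,j\}$: there the entry $a_k$ is in the same bracketed word on both sides, so the correspondence is direct. After this bookkeeping, all diagonal terms cancel, exactly as in (\ref{dgc8})--(\ref{dgc13}) versus (\ref{dgc26})--(\ref{dgc34}) in the proof of Lemma \ref{coalgebra}.

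It remains to analyze the outer part. In $b\circ h(\a)$, when the last line of \eqref{differentialb} is applied to the first tensor factor $a_0[a_1|\cdots|a_{i-1}|a_{j+1}|\cdots|a_n]$, the term that prepends $a_0''$ combines with an adjacent pair $h$-style, and the term that appends $a_0''$ at the end does similarly; and in the second factor $a_ia_j[a_{i+1}|\cdots|a_{j-1}]$ the outer part splits the new ``head'' $a_ia_j$. Using \eqref{frob} to move the split between $a_i$ and $a_j$ and inside $a_0$, the contributions in $h\circ b(\a)$ coming from $b$ acting outerly on $\a$ cancel most of these, except for the two extreme configurations, namely $j=n$ in the case where the newly produced $a_0''$ is appended to the right of the first factor and $j=n-1$ is absent, and analogously on the left with $i=1$. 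These two residual configurations leave precisely the pair
\[
\pm\,(a_0a_i)''[a_{i+1}|\cdots|a_n]\ot(a_0a_i)'[a_1|\cdots|a_{i-1}]
\quad\text{and}\quad
\mp\,(a_0a_i)'[a_1|\cdots|a_{i-1}]\ot(a_0a_i)''[a_{i+1}|\cdots|a_n],
\]
summed over $2\le i\le n-1$, which is exactly $\tau\circ\vee(\a)-\vee(\a)$.

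The main obstacle is keeping track of the Koszul signs through the outer part, where $a_0''$ is cycled past long bracketed strings and then combined with an $a_ia_j$ factor via \eqref{frob}; I expect the sign normalization of $h$ in the lemma statement to have been chosen precisely to make these residual signs come out as $+\tau\vee-\vee$. Everything else is a purely formal matching of tensors in the style of the proof of Lemma \ref{coalgebra}.
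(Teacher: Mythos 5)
Your overall strategy is the same as the paper's: expand both compositions, match the differential and ``coproduct'' terms using the Frobenius relation \eqref{frob}, and identify the unmatched remainder with $\tau\circ\vee-\vee$. The residual terms you write down at the end are indeed the correct ones. However, your bookkeeping of the outer part is not right as stated. The terms of $h\circ b(\a)$ that have no counterpart in $b\circ h(\a)$ are \emph{not} two ``extreme configurations'' singled out by boundary values of the pair indices ($j=n$, $i=1$). Rather: the last line of \eqref{differentialb} creates a brand-new word entry $a_0''$ (prepended or appended), and $h$ may then choose $a_0''$ as one member of its pair and an arbitrary original entry $a_i$ as the other. This produces the two full families $a_0'[a_{i+1}|\cdots|a_n]\ot (a_0''a_i)[a_1|\cdots|a_{i-1}]$ and $a_0'[a_1|\cdots|a_{i-1}]\ot(a_ia_0'')[a_{i+1}|\cdots|a_n]$, one term for \emph{each} $i$ with $2\le i\le n-1$; the relation $a_0'\ot(a_0''a_i)=\pm(a_0a_i)'\ot(a_0a_i)''$ from \eqref{frob} then converts them into $\tau\circ\vee(\a)$ and $-\vee(\a)$. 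By contrast, the outer terms of $b\circ h(\a)$ on the first tensor factor match directly (and in full) against the terms of $h\circ b(\a)$ in which $h$ pairs two original entries of $a_0'[a_0''|a_1|\cdots|a_n]$ or $a_0'[a_1|\cdots|a_n|a_0'']$; there is no ``combining with an adjacent pair'' on the $b\circ h$ side, and the entire residue lives on the $h\circ b$ side. As written, your cancellation scheme for the outer part would not close up.

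There is a second, smaller gap in the diagonal part. When $b$ splits $a_k\mapsto a_k'|a_k''$ and $h$ then uses one of the two halves in its pair, there are two choices. Only one of them (pairing $a_k'$ with a later entry $a_j$, or $a_k''$ with an earlier entry $a_i$, so that the unpaired half lands \emph{inside} the second factor's word) reproduces, via \eqref{frob}, the term of $b\circ h(\a)$ in which the head $a_ia_j$ of the second factor is split. The opposite choice strands the unpaired half in the first tensor factor; it matches nothing in $b\circ h(\a)$ and must instead be cancelled against the analogous term obtained by splitting the \emph{partner} entry, using the middle equality of \eqref{frob} once more (and the pairing of $a_k'$ with $a_k''$ themselves dies because the resulting second factor has empty word, hence vanishes in $\cc_*(C)$). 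Your claim of a ``direct correspondence'' does not cover these terms. To be fair, the paper's own proof also passes over this cancellation silently, but since your write-up asserts a term-by-term bijection, the assertion as stated is false and needs this extra step.
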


\begin{proof}
It is easy to see that the differential parts of the left
hand side of (\ref{cohtpy}) cancel each other, so we only need to
consider the diagonal parts. In fact, the diagonal parts of $h(b\a)$
are equal to
\begin{eqnarray}
&&\sum
a_0'[a_{i+1}|\cdots|a_n]\otimes(a_0''a_i)[a_1|\cdots|a_{i-1}]\label{cty1}\\
&\pm&\sum a_0'[a_0''|a_1|\cdots|a_{i-1}|a_{j+1}|\cdots|a_n]\otimes
(a_ia_j)[a_{i+1}|\cdots|a_{j-1}]\label{bhalpha1}\\
&\mp&\sum
a_0'[a_1|\cdots|a_{i-1}]\otimes(a_ia_0'')[a_{i+1}|\cdots|a_n]\label{cty2}\\
&\mp&\sum a_0'[a_1|\cdots|a_{i-1}|a_{j+1}|\cdots|a_n|a_0'']\otimes
(a_ia_j)[a_{i+1}|\cdots|a_{j-1}]\label{bhalpha2}\\
&\pm&\sum
a_0[a_1|\cdots|a_k'|a_k''|\cdots|a_n]\otimes(a_ia_j)[a_{i+1}|\cdots|a_{j-1}]\label{bhalpha3}\\
&\pm&\sum
a_0[a_1|\cdots|a_{i-1}|a_{j+1}|\cdots
|a_n]\otimes(a_ia_j)[a_{i+1}|\cdots|a_k'|a_k''|\cdots
|a_{j-1}]\label{bhalpha11}\\
&\pm&\sum a_0[a_1|\cdots|a_{i-1}|a_{j+1}|\cdots|a_n]\otimes
(a_ia_j)' [(a_ia_j)''|a_{i+1}|\cdots|a_{j-1}]\label{bhalpha4}\\
&\mp&\sum a_0[a_1|\cdots|a_{i-1}|a_{j+1}|\cdots|a_n]\otimes
(a_ia_j)' [a_{i+1}|\cdots|a_{j-1}|(a_ia_j)'']\label{bhalpha5},
\end{eqnarray}
and
(\ref{bhalpha1})+(\ref{bhalpha2})+(\ref{bhalpha3})
+(\ref{bhalpha11})+(\ref{bhalpha4})+(\ref{bhalpha5})
is exactly $b(h\a)$, while the remaining terms
(\ref{cty1})+(\ref{cty2}) are exactly $\tau\circ\vee(\a)-\vee(\a)$.
The lemma is proved.
\end{proof}

\begin{lemma} \label{coalgs}
Let $h$ be as in Lemma \ref{cocommute}. Define
$S: {\cc}_*(C)\longrightarrow {\cc}_*(C)\otimes{\cc}_*(C)$ by
$$S(\a):=h(\a)-\tau\circ h(\a),\quad\mbox{for any }\a\in{\cc}_*(C).$$
Then the chain complex
$({\cc}_*(C)[2-m], b)$ is a  DG Lie coalgebra with the
cobracket $S$.
\end{lemma}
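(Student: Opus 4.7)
The plan is to establish the three defining properties of a DG Lie coalgebra for $(\cc_*(C)[2-m], b, S)$: anti-cocommutativity $\tau\circ S = -S$, the coderivation property $(b\otimes\mathrm{id} + \mathrm{id}\otimes b)\circ S = S\circ b$, and the co-Jacobi identity $(1+\sigma+\sigma^2)\circ(S\otimes\mathrm{id})\circ S = 0$. The argument follows the same three-step pattern as the proof of the DG Lie algebra structure on $(CC_*(C)[m-1], b, \{\,,\,\})$ from Section~3, with each step carried out by direct verification mimicking its counterpart in \cite{Chen}.

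Anti-cocommutativity is immediate: using $\tau^2 = \mathrm{id}$ on $\cc_*(C)\otimes\cc_*(C)$, we have $\tau\circ S = \tau h - \tau^2 h = -(h - \tau h) = -S$. The coderivation property is extracted from Lemma~\ref{cocommute}. Since $\tau$ is a chain map on the tensor square, applying $\tau$ to the homotopy relation $bh + hb = \tau\vee - \vee$ gives $b(\tau h) + (\tau h)b = \vee - \tau\vee$. Combining the two identities in the sign pattern dictated by the $[2-m]$-shift, the $\vee$ and $\tau\vee$ contributions are absorbed and one obtains the chain-map condition for $S$. This is dual to the cancellation in the algebra case, where the $\bullet$-commutator terms arising in $b(\alpha*\beta)$ and $b(\beta*\alpha)$ annihilate upon forming the antisymmetrized bracket $\{\alpha, \beta\} = \alpha*\beta - (-1)^{(|\alpha|+1)(|\beta|+1)}\beta*\alpha$.

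The co-Jacobi identity is the combinatorial core. I would prove it by direct expansion: each term of $(S\otimes\mathrm{id})\circ S(\alpha)$ corresponds to a pair of index-pairs $(i,j), (k,l)$ that chop the bar of $\alpha = a_0[a_1|\cdots|a_n]$ into three segments, together with two ``connector'' products $a_ia_j$ and $a_ka_l$ whose Frobenius coproducts feed into the three tensor factors. Under the cyclic symmetrizer $1+\sigma+\sigma^2$ the three segments are cyclically permuted, and repeated use of the Frobenius identity \eqref{frob} to migrate coproducts across products matches terms between the three configurations so that they cancel pairwise. The bookkeeping parallels the Jacobi-identity verification for $\{\,,\,\}$ in \cite[Corollary~5.5]{Chen}.

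The main obstacle is the co-Jacobi identity, whose direct verification is long and sign-sensitive. A conceptually cleaner approach is a duality argument: the operators $*$ and $h$ are defined by formally dual constructions---$*$ pairs a middle bar-entry with the initial entry of the second factor via the counit $\varepsilon(a_ib_0)$, while $h$ splits a middle product $a_ia_j$ via the Frobenius coproduct $(a_ia_j)'\otimes(a_ia_j)''$---and the Frobenius pairing on $C$ exchanges the two. If $S$ can be identified as the formal dual of $\{\,,\,\}$ on the reduced complex, the chain-level co-Jacobi for $S$ will follow from the already-established chain-level Jacobi identity for $\{\,,\,\}$ without further computation, bypassing the multi-page direct expansion.
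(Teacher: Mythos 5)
Your first two steps match the paper's proof: skew-symmetry of $S$ is formal from $\tau^2=\mathrm{id}$, and the compatibility of $b$ with $S$ is extracted from the homotopy relation (\ref{cohtpy}) of Lemma \ref{cocommute} exactly as you describe (the paper is equally terse about the sign bookkeeping here, so no complaint on that score).

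The gap is in the co-Jacobi identity, and it is a gap of substance, not just of detail: your description of the terms of $(S\otimes \mathrm{id})\circ S$ is wrong. The homotopy $h$ contains no Frobenius coproducts --- it reads $h(\a)=\sum_{i<j}\pm\, a_0[\cdots \widehat{a_i}\cdots\widehat{a_j}\cdots]\otimes a_ia_j[a_{i+1}|\cdots|a_{j-1}]$, so the connector $a_ia_j$ sits as an honest product in the module slot of the second factor; you appear to have conflated $h$ with $\vee$, where $(a_0a_i)'\otimes(a_0a_i)''$ does occur. Consequently the mechanism you propose --- ``repeated use of the Frobenius identity (\ref{frob}) to migrate coproducts across products'' --- is not what makes the terms cancel, and a computation organized around it would not close up. The actual cancellation is purely combinatorial: in $(h\otimes 1)h-(1\otimes h)h$ the configurations where one index pair straddles the other (from $(h\otimes 1)h$) cancel against the nested configurations (from $(1\otimes h)h$), leaving only the terms indexed by two \emph{disjoint} intervals $k<l<i<j$ and $i<j<k<l$; this residue is visibly invariant under $1\otimes\tau$, and feeding that invariance into the formal expansion of $(1+\sigma+\sigma^2)(S\otimes 1)S$ kills everything. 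That identification of which configurations survive, and the $(1\otimes\tau)$-symmetry of the survivors, is the one real idea in the paper's proof, and it is missing from your plan.

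Your fallback --- deducing co-Jacobi for $S$ by dualizing the chain-level Jacobi identity for $\{\,,\,\}$ --- is an appealing heuristic but is left entirely conditional (``if $S$ can be identified as the formal dual of $\{\,,\,\}$''). Making it precise is not free: $*$ uses the counit of a product, $\varepsilon(a_ib_0)$, while $h$ uses the product $a_ia_j$ placed in a module slot; relating the two through the Frobenius pairing on the completed complex $\prod_n C\otimes(\Sigma\ol{C})^{\otimes n}$, and checking that the dualization intertwines the two homotopies on the nose rather than merely up to further homotopy, would itself require an argument comparable in length to the direct one. As it stands, neither route in your proposal completes the key step.
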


\begin{proof}
It follows from the definition that $S$ is skew-symmetric. 
Moreover,  $b$ commutes with $S$ by (\ref{cohtpy}).
Now, for any $\a=a_0[a_1|\cdots|a_n]$, 
\begin{eqnarray*}
&& (h\ot 1)h(\a) - (1\ot h)h(\a)  \\
&=& \sum_{k<l<i<j} \pm a_0[a_1|\cdots|a_{k-1}|a_{l+1}|\cdots|a_{i-1}|a_{j+1}|\cdots|a_n]  \ot a_ka_l[a_{k+1}|\cdots|a_{l-1}]\ot a_ia_j[a_{i+1}|\cdots|a_{j-1}] \\
&+& \sum_{i<j<k<l} \pm a_0[a_1|\cdots|a_{i-1}|a_{j+1}|\cdots|a_{k-1}|a_{l+1}|\cdots|a_n] \ot a_ka_l[a_{k+1}|\cdots|a_{l-1}]\ot a_ia_j[a_{i+1}|\cdots|a_{j-1}] \\
&=& (1\ot \tau) ((h\ot 1)h(\a) - (1\ot h)h(\a)) .
\end{eqnarray*}
It follows that
$$ (1+\sigma+\sigma^2) (S\ot 1)S = (1+\sigma+\sigma^2)
\Big( (h\ot 1)h - (1\ot h)h - (1\ot \tau) \big((h\ot 1)h - (1\ot h)h\big) \Big) = 0,
$$
so the co-Jacobi identity holds.
\end{proof}

It follows that $(\hh_*(C)[2-m],S)$ is a graded Lie coalgebra.
The Lie cobracket $S$ and the cocommutative coproduct $\vee$ are compatible in the following sense:

\begin{definition}[Gerstenhaber coalgebra]\label{cogersten}
Let $V$ be a graded vector space. A 
\emph{Gerstenhaber coalgebra} on $V$ is a 
triple $(V,\vee,S)$ which satisfies
the following:

\vi $(V,\vee)$ is a graded cocommutative coalgebra;

\vii $(V,S)$ is a graded Lie coalgebra whose Lie cobracket is
of degree 1;

\viii $S:V\to V\otimes V$ is a coderivation with respect to $\vee$, i.e. the following diagram commutes:
$$
\xymatrix{
V\ar[r]^\vee\ar[d]_{S}&V\otimes V\ar[d]^{(id\otimes\tau)\circ(S\otimes id)+id\otimes S}\\
V\otimes V\ar[r]^-{\vee\otimes id}&V\otimes V\otimes V
}$$
% and
%$$\xymatrix{
%V\ar[r]^\vee\ar[d]_{S}&V\otimes V\ar[d]^{(\tau\otimes id)\circ(id\otimes S)+S\otimes id}\\
%V\otimes V\ar[r]^-{id\otimes \vee}&V\otimes V\otimes V.}
%$$
\end{definition}

\begin{theorem} \label{thmcogerst}
The reduced Hochschild homology
$(\hh_*(C)[1-m], \vee, S)$ is a Gerstenhaber coalgebra.
\end{theorem}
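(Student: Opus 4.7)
My plan is to deduce Theorem \ref{thmcogerst} from its chain-level counterpart, dualizing the strategy used for Theorem \ref{thmofgerst}. Axioms (i) and (ii) of Definition \ref{cogersten} have already been verified: Lemma \ref{coalgebra} together with Lemma \ref{cocommute} show that $\vee$ descends to a graded cocommutative coassociative coproduct on $\hh_*(C)[1-m]$, and Lemma \ref{coalgs} shows that $S$ defines a graded Lie cobracket of degree one on $\hh_*(C)[2-m]$. Hence only axiom (iii), the compatibility between $S$ and $\vee$, remains to be established.

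For (iii) my plan is to exhibit an explicit chain homotopy $H:\cc_*(C)\to \cc_*(C)^{\ot 3}$ witnessing
$$\bigl((\vee\ot id)\circ S\bigr)(\a) - \bigl(((id\ot \tau)\circ(S\ot id)+id\ot S)\circ\vee\bigr)(\a) = (b\circ H + H\circ b)(\a),$$
so that the coderivation identity holds after passing to homology. The natural candidate for $H$ is the dual of the operator $h$ appearing in Lemma \ref{lemma45}(ii): for $\a=a_0[a_1|\cdots|a_n]$, it is a signed sum over triples of cutting indices $i<j<k$, in which the bar word is split into three sub-words that are placed in the three tensor factors, with the ``leftover'' letters $a_i,a_j,a_k$ multiplied into the $0$-slots of the factors after an appropriate application of the coproduct $\Delta$. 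One then skew-symmetrizes in the two inner tensor factors so that the output matches the passage from $h$ to $S=h-\tau\circ h$ used to define the cobracket.

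The main technical obstacle is the sign bookkeeping and verifying that $b\circ H+H\circ b$ really equals the claimed discrepancy term-by-term in $\cc_*(C)$. As in the proofs of Lemmas \ref{coalgebra} and \ref{cocommute}, the computation splits into a differential part (terms where $d$ hits one letter) and a diagonal part (terms where an internal $\Delta$ is applied, either to one of the $a_k$'s or to $a_0$). The differential part will cancel because $d$ is a coderivation of $\vee$ at the chain level. For the diagonal part, the Frobenius identity (\ref{frob}) is applied repeatedly to absorb the terms created by moving a coproduct past a multiplication; after grouping by combinatorial pattern, the surviving terms organize themselves into the expected coderivation defect, with signs governed by Koszul's rule. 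Once these cancellations are carried out, axiom (iii) holds in $\cc_*(C)$ up to the homotopy $H$, and therefore on the nose in $\hh_*(C)[1-m]$, yielding the Gerstenhaber coalgebra structure claimed in the theorem.
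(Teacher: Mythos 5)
Your reduction of the theorem to axiom (iii) of Definition \ref{cogersten} is right, and your overall strategy --- exhibit a chain homotopy on $\cc_*(C)$ for the coderivation defect of $S$ with respect to $\vee$ --- is exactly the paper's. But the candidate homotopy you describe would fail. An operator built from a sum over \emph{triples} $i<j<k$ of cutting indices, with the excised letters absorbed into the $0$-slots via an application of $\Delta$, has the degree and the shape of the maps being compared, not of a homotopy between them (which must sit one degree higher): terms of the form
$$\sum_{i<j<k}\pm\,(a_0a_i)'[\cdots]\otimes (a_0a_i)''[\cdots]\otimes (a_ja_k)[\cdots]$$
are precisely what the compositions $(id\otimes\tau\circ h)\circ\vee$, $(\vee\otimes id)\circ(\tau\circ h)$, etc.\ look like, i.e.\ they are the defect to be killed, not the operator that kills it. The operator that actually works is the genuine dual of the double insertion of Lemma \ref{lemma45}(ii) --- which you invoke in one sentence and then contradict in the next --- namely a sum over \emph{quadruples} $i<j<k<l$,
$$\psi(\a)=\sum_{i<j<k<l}\pm\, a_0[a_1|\cdots|a_{i-1}|a_{j+1}|\cdots|a_{k-1}|a_{l+1}|\cdots|a_n]\otimes a_ia_j[a_{i+1}|\cdots|a_{j-1}]\otimes a_ka_l[a_{k+1}|\cdots|a_{l-1}],$$
in which no coproduct $\Delta$ appears at all; the homotopy used is $\sigma\circ\psi$, where $\sigma$ is the cyclic permutation of the three tensor factors.

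You also miss the structural simplification that makes the verification tractable: the coderivation identity $(\vee\otimes id)\circ h=\bigl((id\otimes\tau)\circ(h\otimes id)+id\otimes h\bigr)\circ\vee$ holds \emph{exactly} on $\cc_*(C)$, so only the $\tau\circ h$ half of $S=h-\tau\circ h$ needs a homotopy. Without that observation, and without actually computing $b\circ(\sigma\circ\psi)+(\sigma\circ\psi)\circ b$ and matching its diagonal part against the three defect terms, your argument remains an outline whose central step is both unperformed and, as literally described, based on an operator of the wrong degree.
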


\begin{proof}
 From the definition of $h$, the following diagram commutes
$$
\xymatrix{
 {\cc}\ar[r]^\vee\ar[d]_{h}& {\cc}\otimes  {\cc}
\ar[d]^{(id\otimes\tau)\circ(h\otimes id)+id\otimes h}\\
 {\cc}\otimes {\cc}\ar[r]^-{\vee\otimes id}& {\cc}\otimes  {\cc}\otimes  {\cc}.
}$$
We next show that the following diagram commutes up to homotopy:
\begin{equation}\label{cogersten_htpy}
\xymatrix{
 {\cc}\ar[r]^\vee\ar[d]_{\tau\circ h}& {\cc}\otimes
 {\cc}\ar[d]^{(id\otimes\tau)\circ(\tau\circ h\otimes id)+id\otimes\tau\circ h}\\
 {\cc}\otimes  {\cc}\ar[r]^-{\vee\otimes
id}& {\cc}\otimes  {\cc}\otimes  {\cc},
}\end{equation} 
and therefore, from $S=h-\tau\circ h$, the following
diagram commutes:
$$ \xymatrix{
 {\hh}\ar[r]^\vee\ar[d]_{S}& {\hh}\otimes
 {\hh}
\ar[d]^{(id\otimes\tau)\circ(S\otimes id)+id\otimes S}\\
 {\hh}\otimes  {\hh}\ar[r]^-{\vee\otimes id}
& {\hh}\otimes  {\hh}\otimes  {\hh}. }$$

To this end, let $\psi: {\cc}\longrightarrow {\cc}\otimes {\cc}\otimes {\cc}$
be the map defined by
\begin{eqnarray*}\psi(\a)&:=&\sum_{i<j<k<l}\pm a_0[a_1|\cdots|a_{i-1}|a_{j+1}|\cdots|a_{k-1}|a_{l+1}|\cdots|a_n]\\
&&\quad\quad\quad\quad\quad\quad\otimes
a_ia_j [a_{i+1}|\cdots|a_{j-1}]\otimes a_ka_l[a_{k+1}|\cdots|a_{l-1}],\end{eqnarray*}
for any $\a=a_0[a_1|\cdots|a_n]$.
Let $\phi:=\sigma\circ\psi$.
Then
\begin{equation}\label{ht_1}(b\circ\phi+\phi\circ b)(\a)=\big((\vee\otimes id)\circ (\tau\circ h)-((id\otimes\tau)\circ(\tau\circ h\otimes id)
+id\otimes\tau\circ h)\circ\vee\big)(\a),\end{equation} for any $\a\in
 {\cc}$.
Indeed, one has 
 \begin{eqnarray}
 && \psi\circ b(\a) \nonumber \\
 &=& -b\circ\psi(\a) \nonumber \\
 &+&\sum_{i<j<k}\pm  (a_0a_i)'[a_{i+1}|\cdots|a_{j-1}|a_{k+1}|\cdots|a_n]\otimes
 (a_0a_i)''[a_1|\cdots|a_{i-1}]\otimes
 (a_ja_k)[a_{j+1}|\cdots|a_{k-1}]\quad
 \label{rem_1}\\
&+&\sum_{j<i<k}\pm a_0[a_1|\cdots|a_{j-1}|a_{k+1}|\cdots|a_n]\otimes(a_ja_ka_i)'[a_{j+1}|\cdots|a_{i-1}]\otimes(a_ja_ka_i)''[a_{i+1}|\cdots|a_{k-1}]\quad\quad 
 \label{rem_2}\\
 &+&\sum_{j<k<i}\pm (a_0a_i)'[a_1|\cdots|a_{j-1}|a_{k+1}|\cdots|a_{i-1}]\otimes (a_ja_k)[a_{j+1}|\cdots|a_{k-1}]\otimes(a_0a_i)''[a_{i+1}|\cdots|a_n].
 \label{rem_3} 
 \end{eqnarray}
 After applying $\sigma$, 
 (\ref{rem_1}) becomes $(id\otimes\tau\circ h)\circ\vee(\a)$,
 $(\ref{rem_2})$ becomes $(\vee\otimes id)\circ(\tau\circ h)(\a)$,
 and $(\ref{rem_3})$ becomes $(id\otimes\tau)\circ(\tau\circ h\otimes id)\circ\vee(\a)$. 
 This proves the identity (\ref{ht_1}), and hence
(\ref{cogersten_htpy}) is proved.
\end{proof}

\subsection{}
Theorem \ref{theoremcoBV} follows 
from the dual version of \cite[Proposition 1.2]{Ge94},
Theorem \ref{thmcogerst},
and the following lemma.

\begin{lemma}For any $\a=a_0 [a_1|\cdots|a_n]\in {\cc}_*(C)$, let 
\begin{eqnarray}\phi(\a)&:=&\sum_{i<j<k}\pm\varepsilon(a_0)a_i [a_{i+1}|\cdots|a_{j-1}|a_{k+1}|\cdots|a_n|a_1|\cdots|a_{i-1}]\otimes
a_ja_k [a_{j+1}|\cdots|a_{k-1}],\label{htpy_11}\\
\psi(\a)&:=&\sum_{j<k<i}\pm\varepsilon(a_0) a_ja_k[a_{j+1}|\cdots|a_{k-1}]\otimes a_i[a_{i+1}|\cdots|a_n|a_1|\cdots|a_{j-1}|a_{k+1}|\cdots|a_{i-1}],\label{htpy_22}
\end{eqnarray}
and let $\theta=\phi+\psi$. 
Then
$$b\circ \theta + \theta \circ b=\vee\circ B-B\circ\vee-S,$$
where $S$ is as defined in Lemma \ref{coalgs}.
\end{lemma}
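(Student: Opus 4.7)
The plan is to verify the identity by direct expansion, following the template of Lemma \ref{bvgersten} (the BV/Gerstenhaber compatibility). The guiding intuition is dual to the BV case: the Lie cobracket $S$ should equal $\vee\circ B - (B\otimes \mathrm{id} + \mathrm{id}\otimes B)\circ \vee$ at the level of homology, and $\theta = \phi+\psi$ provides the chain-level homotopy realizing this identity; indeed this is exactly what one needs to conclude Theorem \ref{theoremcoBV} from Theorem \ref{thmcogerst} via the dual of \cite[Proposition 1.2]{Ge94}.

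First I would expand the four ingredients on the right-hand side. Applying $B$ before $\vee$ produces a triple sum indexed by a cyclic rotation $i$ (picking the new basepoint $a_i$, with $\varepsilon(a_0)$ killing all but the coaugmentation term) together with a splitting pair $(j,k)$ inside the rotated sequence. Applying $\vee$ before $B$ (extended as $B\otimes\mathrm{id}+\mathrm{id}\otimes B$) produces a sum over the splitting pair and then cyclic rotations inside each tensor factor. Finally, $S(\a)=h(\a)-\tau\circ h(\a)$ collects the terms in which the multiplied pair $(a_ia_j)$ sits on one side and the remaining chain is in the same order, but with one or the other tensor factor having lost its basepoint $a_0$ (these are precisely the terms that $\varepsilon(a_0)$ singles out in $\phi$ and $\psi$).

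Next I would compute $b\circ\theta+\theta\circ b$. Applying $b$ to $\phi(\a)$ or $\psi(\a)$ yields the four kinds of terms we already saw in Lemma \ref{coalgebra}: (i) internal differentials $da_\ell$; (ii) internal coproducts $a_\ell\mapsto a'_\ell\otimes a''_\ell$; (iii) coproducts of the leading entries $a_i$ and $a_ja_k$; and (iv) boundary terms at the two ends of the bar resolution. The (i)-terms in $b\theta(\a)$ cancel against the corresponding terms in $\theta\circ b(\a)$ by Leibniz, exactly as in the proof of Lemma \ref{coalgebra}. The (ii)-terms together with the (iv)-terms are rearranged by the Frobenius relation (\ref{frob}) into the terms of $-(B\otimes\mathrm{id}+\mathrm{id}\otimes B)\circ\vee(\a)$, via the same cancellation pattern as in Lemma \ref{coalgebra}. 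The (iii)-terms assemble into the "new basepoint" terms of $\vee\circ B(\a)$. The terms left over are those coming from $\theta\circ b(\a)$ in which one of the coproduct splittings $a_\ell\mapsto a'_\ell|a''_\ell$ lands next to the basepoint $a_i$ in the pattern required by $h$ or $\tau\circ h$; using (\ref{frob}) once more these collapse to precisely $-S(\a)=\tau\circ h(\a)-h(\a)$.

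The main obstacle is the sign and index bookkeeping. The reason $\theta$ must be $\phi+\psi$ rather than $\phi$ alone is that the two summands cover the two cases in which the cyclic basepoint produced by $B$ lies either to the left or to the right (cyclically) of the multiplied pair cut out by $\vee$, and certain leftover terms from $b\circ\phi$ find their partners only in $\psi\circ b$ (and vice versa). The Koszul sign analysis is directly parallel to that in the proof of Lemma \ref{bvgersten} (see \cite[Lemma 7.3]{Chen}); it is entirely mechanical but tedious, reducing to sorting several dozen signed summands into the six groups $\vee B$, $-(B\otimes\mathrm{id}+\mathrm{id}\otimes B)\vee$, $-h$, $\tau h$, the internal $(i)$-cancellation, and pairwise cancellations between $\phi$- and $\psi$-contributions.
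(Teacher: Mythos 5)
Your proposal is correct and follows essentially the same route as the paper's own proof: a direct chain-level verification that expands $\vee\circ B$, $(B\otimes\mathrm{id}+\mathrm{id}\otimes B)\circ\vee$ and $S=h-\tau\circ h$ into indexed sums, expands $b\circ\theta+\theta\circ b$, and matches the resulting terms using the Frobenius relation (\ref{frob}), with $\phi$ and $\psi$ covering the two complementary cyclic configurations; the paper likewise only records the resulting groupings and defers the sign bookkeeping to the analogue of Lemma \ref{bvgersten}. (Your a priori guess of which groups arise from $b\circ\theta$ versus $\theta\circ b$ differs in places from how the terms actually sort --- for instance the $\vee\circ B$ terms arise from $\theta\circ b$ applied to the coproduct terms of $b\alpha$ in degenerate configurations rather than from the basepoint-coproduct terms of $b\circ\theta$ --- but since only the sum $b\circ\theta+\theta\circ b$ enters the identity, this does not affect the argument.)
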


\begin{proof}The proof is similar to that of Lemma \ref{bvgersten}. 
For any $\a=a_0[a_1|\cdots|a_n]$,
\begin{eqnarray}
\vee\circ B(\a)&=&\sum_{i>j}\pm\varepsilon(a_0)(a_ia_j)'[a_{i+1}|\cdots|a_n|a_1|\cdots|a_{j-1}]\otimes (a_ia_j)''[a_{j+1}|\cdots|a_{i-1}]\label{terms_11}\\
&&+\sum_{i<j}\pm\varepsilon(a_0)(a_ia_j)'[a_{i+1}|\cdots|a_{j-1}]\otimes (a_ia_j)''[a_{j+1}|\cdots|a_{n}|a_{1}|\cdots|a_{i-1}],\label{terms_12}\\
B\circ \vee(\a)&=&\sum_{i>k}\pm a_k[a_{k+1}|\cdots|a_{i-1}|a_1|\cdots|a_{k-1}]\otimes a_0a_i[a_{i+1}|\cdots|a_n]\label{terms_21}\\
&+&\sum_{i<k} \pm a_0a_i[a_1|\cdots|a_{i-1}]\otimes  a_k[a_{k+1}|\cdots|a_n|a_{i+1}|\cdots|a_{k-1}],\label{terms_22}\\
S(\a)&=&\sum_{i<j}\pm a_0[a_1|\cdots|a_{i-1}|a_{j+1}|\cdots|a_n]\otimes a_ia_j[a_{i+1}|\cdots|a_{j-1}]\label{terms_31}\\
&+&\sum_{i<j}\pm a_ia_j[a_{i+1}|\cdots|a_{j-1}]\otimes a_0[a_1|\cdots|a_{i-1}|a_{j+1}|\cdots|a_n].\label{terms_32}
\end{eqnarray}
It follows that
$$ \phi\circ b(\a) = -b\circ\phi(\a) + (\ref{terms_11})+(\ref{terms_21})-(\ref{terms_31}), $$
while
$$\psi\circ b(\a) = -b\circ\psi(\a)+(\ref{terms_12})+(\ref{terms_22})-(\ref{terms_32}).$$
This proves the lemma.
\end{proof}

\section{\bf The gravity coalgebra}

\subsection{}
Corollary \ref{corollarycogravity} is immediate from
Theorem \ref{theoremcoBV} and the following result.

\begin{proposition}
Let $(V,\vee,\Delta)$ be a Batalin-Vilkovisky coalgebra,
and $W$ be a graded vector space. Let $\E:V_*\to W_*$ and
$\M: W_*\to V_{*+1}$ be two maps such that $\E\circ \M=0$ 
and $\M\circ\E=\Delta$. 
Define $s_n:W\la W^{\otimes n}$  ($n\ge 2$)
by
$$s_n(\a):=(\E\otimes\cdots\otimes \E)\circ(\vee\otimes id^{\otimes n-2})\circ\cdots\circ\vee\circ \M(\a),$$
for any $\a\in W$. Then $(W[1],\{s_n\})$ is a gravity coalgebra.
\end{proposition}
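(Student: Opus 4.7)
The plan is to dualize the proof of Proposition \ref{gravityalg} step by step. That proof rested on two ingredients: the iterated second-order derivation identity (\ref{deriv}) for $\Delta$ on an $n$-fold product, together with the vanishing relations $\E\circ\Delta=0$ and $\Delta\circ\M=0$ (immediate consequences of the hypotheses $\E\circ\M=0$ and $\M\circ\E=\Delta$). I would therefore first establish the coalgebra analog of (\ref{deriv}) and then combine it with these same two vanishing relations.

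The key lemma is the dual of (\ref{deriv}): writing $\vee^{(n)}:=(\vee\otimes id^{\otimes n-2})\circ\cdots\circ(\vee\otimes id)\circ\vee$ for the iterated coproduct, one has
$$
\vee^{(n)}\circ\Delta \;=\; \sum_{1\le i<j\le n}\pm\,P_{ij}^{(n)} \;+\; (n-2)\sum_{i=1}^{n}\pm\,(id^{\otimes i-1}\otimes\Delta\otimes id^{\otimes n-i})\circ\vee^{(n)},
$$
where $P_{ij}^{(n)}:V\to V^{\otimes n}$ is the composite that applies $\vee^{(n-1)}$, then $\vee\circ\Delta$ to one of the $n-1$ factors, and finally shuffles the two new entries into positions $(i,j)$. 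The base case $n=3$ is exactly the BV coalgebra axiom of Definition \ref{coBVdef}, solved for $\vee^{(3)}\circ\Delta$. For the inductive step one applies $\vee\otimes id^{\otimes n-2}$ to the $n-1$ version, uses coassociativity of $\vee$ to absorb the extra coproduct, and invokes the BV coalgebra axiom once more on any fresh triple of the form $(\vee\otimes id)\circ\vee\circ\Delta$ that appears.

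Given the lemma, the gravity coalgebra identity (\ref{coLieinftyidentity}) for $k\ge 2$ and $l\ge 0$ is obtained as follows. Take the lemma with $n=k$, tensor both sides with $id^{\otimes l}$ on the right, and precompose with $\vee^{(l+1)}$, so that both sides become maps $V\to V^{\otimes k+l}$. Now postcompose with $\M$ on the right and $\E^{\otimes k+l}$ on the left. Each single-$\Delta$ term carries a factor of $\E\circ\Delta=\E\circ\M\circ\E=0$ at some position in $[1,k]$, so those terms are annihilated. The surviving identity is
$$
\E^{\otimes k+l}\circ(\vee^{(k)}\Delta\otimes id^{\otimes l})\circ\vee^{(l+1)}\circ\M \;=\; \sum_{1\le i<j\le k}\pm\;\E^{\otimes k+l}\circ(P_{ij}^{(k)}\otimes id^{\otimes l})\circ\vee^{(l+1)}\circ\M.
$$
Unpacking via $\M\circ\E=\Delta$ and coassociativity of $\vee$, the left-hand side equals $(s_k\otimes id^{\otimes l})\circ s_{l+1}$ (the right-hand side of (\ref{coLieinftyidentity})), while the sum on the right equals $S_{2,k-2}\circ(s_2\otimes id^{\otimes k-2})\circ s_{k-1+l}$: the shuffle $S_{2,k-2}$ accounts precisely for the $\binom{k}{2}$ positions $(i,j)\subset[1,k]$ into which the pair $\vee\circ\Delta=\vee\circ\M\circ\E$ coming from $s_2$ can be inserted into the first slot of $s_{k-1+l}$. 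When $l=0$ the left-hand side additionally vanishes thanks to $\Delta\circ\M=0$, in agreement with the convention $s_1=0$ of Definition \ref{def_of_gravity_coalg}.

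The main obstacle will be Koszul sign bookkeeping, both in the inductive proof of the dual of (\ref{deriv}) and in the identification of each $(P_{ij}^{(k)}\otimes id^{\otimes l})\circ\vee^{(l+1)}$ with the corresponding summand of the shuffle $(S_{2,k-2}\otimes id^{\otimes l})\circ(\vee\Delta\otimes id^{\otimes k-2+l})\circ\vee^{(k-1+l)}$; all other steps are formal consequences of coassociativity and of the two vanishing relations $\E\circ\Delta=0$ and $\Delta\circ\M=0$.
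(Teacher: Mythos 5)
Your proposal is correct and follows essentially the same route as the paper: your key lemma is precisely the paper's identity (\ref{coBVidentity}) (the iterated form of the axiom of Definition \ref{coBVdef}, with your sum of $P_{ij}^{(n)}$'s being the shuffle term $S_{2,n-2}\circ(\vee\circ\Delta\otimes id^{\otimes n-2})\circ\vee_{n-1}$), and the subsequent composition with $\M$ and $\E^{\otimes k+l}$ together with the vanishing relations $\E\circ\Delta=0$ and $\Delta\circ\M=0$ is exactly the paper's argument. The only cosmetic differences are that you treat $l=0$ and $l>0$ uniformly where the paper separates the two cases, and that your single-$\Delta$ sum runs over all $n$ tensor positions, which is the faithful dualization of (\ref{deriv}) (the paper's $\sum_{i=1}^{n-1}$ appears to omit the first position, but this is immaterial since every such term is annihilated by $\E\circ\Delta=0$).
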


\begin{proof}
The proof is analogous to that of Proposition \ref{gravityalg}.
By induction on $n$, we deduce from the identity in 
Definition \ref{coBVdef} that
\begin{equation}\label{coBVidentity}
\vee_n\circ\Delta( x)-
(n-2)(\sum_{i=1}^{n-1} id^{\otimes i}\otimes\Delta\otimes 
id^{\otimes n-i-1})\circ\vee_n(x)
=S_{2,n-2}\circ(\vee\circ\Delta\otimes id^{\otimes n-2})\circ\vee_{n-1}(x), \end{equation} for all $x\in V$,
where we set $\vee_n:=(\vee\otimes id^{\otimes n-2})\circ\cdots\circ\vee:V\to V^{\otimes n}$ as before. 

Let $x=\M(\a)$ where $\a\in W$. 
Applying $\E^{\otimes n}$ to both sides of (\ref{coBVidentity}),
we get
\begin{eqnarray*}
&& \E^{\otimes n}\circ\Big( \vee_n\circ \Delta(\M(\a))-
(n-2)\sum_{i=1}^{n-1} id^{\otimes i}\otimes 
\Delta\otimes id^{\otimes n-i-1})\circ\vee_n(\M(\a))\Big)\\
&=&
\E^{\otimes n}\circ S_{2,n-2}\circ(\vee\circ 
\Delta\otimes id^{\otimes n-2})\circ\vee_{n-1}(\M(\a)),\end{eqnarray*}
where the left hand side vanishes since 
$\Delta=\M\circ \E$ and $\E\circ \M=0$. Hence, we have
\begin{eqnarray*}
0&=& \E^{\otimes n}\circ S_{2,n-2}\circ(\vee\circ 
\Delta\otimes id^{\otimes n-2})\circ \vee_{n-1}(\M(\a))\\
&=&\E^{\otimes n}\circ S_{2,n-2}\circ(\vee\circ \M\circ \E\otimes id^{\otimes n-2})\circ \vee_{n-1}(\M(\a))\\
&=&S_{2,n-2}\circ(\E^{\otimes 2}\circ\vee\circ(\M\circ \E)\otimes \E^{\otimes n-2})\circ\vee_{n-1}(\M(\a))\\
&=&S_{2,n-2}\circ(s_2\otimes id^{\otimes n-2})\circ s_{n-1}(\a).\end{eqnarray*} This proves the identity (\ref{coLieinftyidentity}) in the definition of a gravity coalgebra for the case $l=0$.

Now let $l>0$. Let $x=\M(\a)$ where $\a\in W$
and suppose
 $$\vee_{l+1}(x)=x_1\otimes \cdots\otimes x_{l+1}.$$
Applying the identity (\ref{coBVidentity}) to the first component on both sides, by the same argument as above, we obtain:
$$S_{2,n-2}\circ (s_2\otimes id^{\otimes n-2}) \circ s_{n-1+l}(\a)
=(s_n\otimes id^{\otimes l})\circ s_{l+1}(\a).$$
This proves the identity (\ref{coLieinftyidentity}) for the case $l>0$. 
\end{proof}

\bibliographystyle{plain}

\smallskip

\footnotesize{

\noindent
{\bf X.C.}:
Department of Mathematics, University of Michigan,
Ann Arbor,  MI 48109, USA;\\
\hphantom{x}\quad\, {\tt xch@umich.edu}      

\smallskip

\noindent
{\bf W.L.G.}:
Department of Mathematics, University of California,
Riverside, CA 92521, USA;\\
\hphantom{x}\quad\, {\tt wlgan@math.ucr.edu}   }

\end{document}